\documentclass[reqno, 11pt]{amsart}
\usepackage{amssymb,amsmath,amsfonts}
\usepackage[margin=1in]{geometry}
\usepackage{dsfont}
\usepackage{color}
\usepackage{graphicx}
\usepackage{latexsym}
\usepackage{amsmath}
\usepackage{amssymb}
\usepackage{graphics}
\usepackage[dvips]{epsfig}
\usepackage{mathrsfs}
\usepackage{mathtools}
\usepackage{leqno}
\usepackage{stmaryrd,cite}

\usepackage{cases}
\usepackage{enumerate}
\usepackage[usenames,dvipsnames,svgnames]{xcolor}
\definecolor{refkey}{rgb}{1,0,0.5}
\definecolor{labelkey}{rgb}{0,0.4,1}
\usepackage{todonotes}
\usepackage{marginnote}

\presetkeys{todonotes}{fancyline, color=green!40}{}

\makeatletter
\renewcommand{\@todonotes@drawMarginNoteWithLine}{%
	\begin{tikzpicture}[remember picture, overlay, baseline=-0.75ex]%
	\node [coordinate] (inText) {};%
	\end{tikzpicture}%
	\marginnote[{
		\@todonotes@drawMarginNote%
		\@todonotes@drawLineToLeftMargin%
	}]{
		\@todonotes@drawMarginNote%
		\@todonotes@drawLineToRightMargin%
	}%
}
\makeatother
\usepackage{listings}
\usepackage{ifpdf}
\ifpdf
\usepackage[CJKbookmarks=true,
         hyperindex=true,
         pdfstartview=FitH,
         bookmarksnumbered=true,
         bookmarksopen=true,
         colorlinks=true,
         citecolor=blue,
         linkcolor=blue,
         urlcolor=blue,
         pdfborder=001,
         pdfauthor={},
         pdftitle={},
         pdfkeywords={},
         ]{hyperref}
\else
\usepackage[
         hypertex,
         hyperindex,
         linkcolor=blue,
         unicode,
         citecolor=blue%
         ]{hyperref}
\fi
\usepackage{cleveref}

\allowdisplaybreaks


\numberwithin{equation}{section}
\newtheorem{theorem}{Theorem}[section]

\newtheorem{lemma}[theorem]{Lemma}

\newtheorem{remark}[theorem]{Remark}



\newcommand{\be}{\begin{equation}}
\newcommand{\ee}{\end{equation}}

\newcommand{\bee}{\begin{equation*}}
\newcommand{\eee}{\end{equation*}}

\newcommand{\bse}{\begin{subequations}}
\newcommand{\ese}{\end{subequations}}

\newcommand{\bs}{\begin{split}}
\newcommand{\es}{\end{split}}


\begin{document}

\author{Hairong Liu$^{1}$}\thanks{$^{1}$School of Mathematics and Statistics, Nanjing University of Science and Technology,  Nanjing 210094 P.R.China.
E-mail : hrliu@njust.edu.cn}

\author{Long Tian$^{2}$}\thanks{$^{2}$School of Mathematics and Statistics, Nanjing University of Science and Technology,  Nanjing 210094 P.R.China.
E-mail: tianlong19850812@163.com}

\author{Xiaoping Yang$^{3}$}\thanks{$^{3}$School of Mathematics,  Nanjing University, Nanjing 210093, P.R. China.
E-mail: xpyang@nju.edu.cn}

\title[] {Quantitative unique continuation property for solutions to a bi-Laplacian equation with a potential}

\begin{abstract}
In this paper, we focus on the quantitative unique continuation property of solutions to
\begin{equation*}
\Delta^2u=Vu,
\end{equation*}
where $V\in W^{1,\infty}$.
We show that
the maximal  vanishing order of the solutions
is not large than
\begin{equation*}
C\left(\|V\|^{\frac{1}{4}}_{L^{\infty}}+\|\nabla V\|_{L^{\infty}}+1\right).
\end{equation*}
 Our key argument is to lift the original equation to that with a positive potential, then decompose the resulted fourth-order equation into a special system of two second-order equations. Based on the special system, we define a variant frequency function with weights and derive its almost monotonicity to establishing some doubling inequalities with explicit dependence on the Sobolev norm of the potential function.

\noindent {\bf Keywords}: Maximal vanishing order; bi-Laplace operator; Frequency function; Monotonicity.\\

{\bf AMS Subject Classifications.}  35J30, 35A02.
\end{abstract}

\maketitle

\section{Introduction and main theorem}
This paper is devoted to investigating the quantitative unique continuation property  for solutions to a bi-Laplacian equation with a potential.
A function $u\in L^2$ is said to have vanishing order $k\geq 0$ at some point $x_0\in\mathbb{R}^{n}$ if
\begin{align}\label{def}
 \frac{\int_{B_{r}(x_0)}u^{2}dx}{r^{n+2k}}=O(1),
\end{align}
and  vanish to infinite order at point $x_0$ if
\begin{eqnarray*}
\int_{B_r(x_0)}u^2=o(r^{n+2k})
\quad \mbox{for any integer} \quad k.
\end{eqnarray*}
A differential operator $L$ is said to have the strong unique continuation property in a connected domain $\Omega$ if the only solution of $Lu=0$
which vanishes to infinite order at a point $x_0\in\Omega$ is $u\equiv0$. If the strong unique continuation property holds for $L$, that means the nontrivial solutions of $Lu=0$ do not vanish of infinite order.
It is interesting to characterize the vanishing orders  of solutions by the coefficient functions appeared in the equations $Lu=0$, this is called the quantitative unique continuation property.

In the past decades the strong unique continuation property for solutions to various kinds of
PDEs has attracted a large number of researchers and induced
 many interesting and intensive results.
 The results  in this aspect for the second order operator
\begin{align}\label{sch}
-\Delta u=Vu
\end{align}
go back to the work of Carleman \cite{c1939}, who solved the uniqueness problem in $\mathbb{R}^{2}$ with  bounded potentials.  Cordes \cite{c1956} and Aronszajn \cite{a1957} extended the strong unique continuation property to second-order equations in $\mathbb{R}^{n}$.
One of subsequent significant developments
in this direction  is due to Jerison and Kenig \cite{jk1985} who studied
the strong unique continuation property for (\ref{sch}) with $V\in L_{loc}^{n/2}(\mathbb{R}^{n})$.
On the other hand, Garofalo and Lin \cite{G-L1986,G-L1987} presented a geometric-variational approach to the strong unique continuation by defining a kind of  frequency functions. Their method is based on establishing one doubling estimate which in turn depends on the monotonicity property of the frequency functions.
It is worth pointing out that the frequency function was first introduced by Almgren \cite{alm} for harmonic functions.
At present, the Carleman estimates and frequency functions  are two principal ways to obtain  strong and/or quantitative unique continuation properties. There is a large amount of work on strong unique continuation  for second order elliptic operators (cf. \cite{DLW2021, h2001} and references therein).

It is well known that all zeros of nontrivial solutions of second order
linear equations on smooth compact Riemannian manifolds are of finite order.
For classic eigenfunctions on a compact smooth Riemannian manifold $M$,
\begin{align*}
-\Delta_{g}\varphi_{\lambda}=\lambda\varphi_{\lambda}\quad \mbox{in}\quad M,
\end{align*}
Donnelly, Fefferman \cite{D1988} and Lin \cite{lin1991} showed that the maximal vanishing order of  eigenfunctions $\varphi_{\lambda}$ is
less than $C\sqrt{\lambda}$, here $C$ only depends on the manifold. Kukavica  \cite{K1998} studied
the vanishing order of solutions to the Schr\"{o}dinger equation:
\begin{equation}\label{S}
-\Delta u=V(x)u.
\end{equation}
Kukavica showed  that the vanishing order of solutions is less than
$C\Big(1+\|V^{-}\|_{L^{\infty}}^{1/2}+\mbox{osc}V+\|\nabla V\|_{L^{\infty}}\Big)$ provided $V\in W^{1,\infty}$ by using the frequency function argument.
 However, this upper bound is not sharp compared to the case  $V(x)=\lambda$.
Bakri \cite{B2012} and Zhu \cite{zhu2016} improved the  upper bounded of vanishing order to $\left(1+\sqrt{\|V\|_{W^{1,\infty}}}\right)$ by different methods.
On the other hand, if $V(x)\in L^{\infty}$, Bourgain and Kenig \cite{BK}
showed that the  upper bounded of vanishing order is not large than $C\|V\|^{\frac{2}{3}}_{L^{\infty}}$. Moreover, Kenig \cite{K} also pointed out that the exponent $\frac{2}{3}$ of $\|V\|_{L^{\infty}}$ is sharp for complex valued $V(x)$ thanks to Meshkov's example in \cite{M}.
 In \cite{K}  Kenig  asked if the vanishing order could be improved to $\sqrt{\|V\|_{L^{\infty}}}$, matching that of the eigenfunctions, in the real-valued setting. We also point out the more recent works \cite{D2020,DZ,KS2016} for the quantitative unique continuation properties
of solutions to second order elliptic equations with singular lower order terms.

 The  unique continuation property for higher order elliptic operators is more complex than  second order
operators. Alinhac \cite{a} constructed a strong uniqueness counterexample for differential operators $P$ of any order in $\mathbb{R}^2$, under the condition that $P$ has two simple, nonconjugate complex characteristics. The strong unique continuation property for  the m-th powers of a Laplacian operator with singular coefficients has been intensively investigated, see e.g. \cite{cg1999, ck2010, K2012,linc2007,liu2022}.
However, there are relatively fewer results
about the  quantitative unique continuation property for solutions to higher order elliptic equations.
 Kukavica \cite{K1995} considered eigenfunctions of a $2m$-order regular  elliptic problem on a bounded domain with real-analytic boundary,  showed the maximal order of vanishing corresponding to eigenfunctions $u_{\lambda}$  is  $\lambda^{1/{2m}}$.
 For the upper bounds of vanishing order and estimates on Hausdorff measure of nodal sets corresponding to eigenfunctions  of the bi-Laplace operators  with  different boundary conditions, the reader is referred to recent works
\cite{LIN2022,TY2022,TY2023}. On the other hand,
for the higher order elliptic equations with a potential
\begin{align*}
\Delta^{m}u+V(x)u=0,
\end{align*}
Zhu  \cite{zhu2016}  proved that  the vanishing order of $u$ is less than $\|V\|_{L^{\infty}}$ for $n\geq 4m$ by
a variant of frequency function. Zhu \cite{zhu2019} improved the vanishing order of solutions to
\begin{align*}
\Delta^2 u=V(x)u
\end{align*}
to $C\|V\|^{\frac{1}{3}}_{L^{\infty}}$. See \cite{linc2011,ZHU2018} for more results about
the quantitative unique continuation properties of solutions to higher order elliptic
equations.

Based on the above results on the maximal order of vanishing corresponding to eigenfunctions of bi-Laplacian \cite{K1995,LIN2022,TY2022,TY2023},
 a natural problem is that whether the order of vanishing can be reduced to $\|V\|^{\frac{1}{4}}_{L^{\infty}}$
 for the solutions to $\Delta^{2}u=V(x)u$.

 The goal of the paper is to  continue to investigate   the above problem in some sense and try to improve the upper bound of   maximal vanishing order of solutions to the following bi-Laplacian with a  potential
\begin{equation*}
\Delta^2u=V(x)u\quad \mbox{in}\quad  B_{1}(0),
\end{equation*}
under the assumption $V\in W^{1,\infty}$, where $B_1(0)$ is a ball centered at origin with radius 1 in $\mathbb{R}^n$ ($n\geq2$). Precisely,
the main result of this paper is the following theorem.
\begin{theorem}\label{thm}
Let $u$ be a solution of
\begin{equation} \label{equ1}
\Delta^2u=V(x)u \quad \mbox{in} \quad B_{1}(0).
\end{equation}
Assume that $V\in W^{1,\infty}$, $\|u\|_{L^{\infty}(B_{1}(0))}\leq C_0$ and $u(0)\geq 1$.
Then the vanishing order of  $u$ at any point $x\in B_1(0)$ is less than
\begin{equation*}
C\left(\|V\|^{\frac{1}{4}}_{L^{\infty}(B_1(0))}+\|\nabla V\|_{L^{\infty}(B_1(0))}+1\right),
\end{equation*}
where $C$ depends only on $n$ and $C_0$.
\end{theorem}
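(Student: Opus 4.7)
The plan is to follow the three-step strategy hinted at in the abstract.

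\emph{Step 1: Lift to a positive mass term.} Add one dummy variable $t$ and set $w(x,t) = u(x)e^{\mu t}$ in the cylinder $B_1(0)\times\mathbb{R}$, where $\mu>0$ is a free parameter to be fixed later. A direct computation gives
\begin{equation*}
(\Delta_{x,t} - \mu^2)^2 w = V(x)\, w.
\end{equation*}
Introducing the auxiliary function $v := (\Delta_{x,t}-\mu^2) w$ factors this into the coupled second-order system
\begin{equation*}
\Delta w - \mu^2 w = v, \qquad \Delta v - \mu^2 v = V(x)\, w,
\end{equation*}
whose diagonal now carries a strictly positive mass $\mu^2$. This is the ``special system'' the abstract mentions.

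\emph{Step 2: Weighted frequency and almost monotonicity.} Because $v$ has the dimension of $\Delta w$ at scale $r$, the natural relative weight between the two components is $r^{-4}$. I would therefore define
\begin{equation*}
H(r) = \int_{\partial B_r} (w^2 + r^4 v^2)\, dS, \qquad D(r) = \int_{B_r} \bigl( |\nabla w|^2 + \mu^2 w^2 + r^4 |\nabla v|^2 + r^4 \mu^2 v^2 \bigr) dx,
\end{equation*}
and $N(r) = r D(r)/H(r)$, with small additive corrections if needed. Differentiating $H$ and $D$, substituting the system, and invoking the Rellich--Pohozaev identity on each equation separately, I expect an inequality of the form
\begin{equation*}
N'(r) \geq - C \bigl( \mu^4 r + \|V\|_{L^\infty} r^5 + \|\nabla V\|_{L^\infty} r^3 + r \bigr)(1+N(r)).
\end{equation*}
The choice $\mu = \|V\|_{L^\infty}^{1/4}$ balances the $\mu^4$ and $\|V\|_{L^\infty}$ contributions, both of which then integrate to the same scale $\sqrt{\|V\|_{L^\infty}}\, r^2$. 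Integrating on a small interval yields the a priori bound $N(r)\leq C(\|V\|^{1/4}_{L^\infty}+\|\nabla V\|_{L^\infty}+1)$, and the almost-monotonicity of $N$ translates into a doubling inequality $H(2r) \leq e^{CN(r_0)} H(r)$ for $r$ small.

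\emph{Step 3: Back to $u$ and vanishing order.} Since $w(x,0)=u(x)$ and $w$ is smooth across $t=0$, the cylinder doubling inequality descends, via trace/Caccioppoli-type estimates in the $t$-variable, to a doubling inequality for $\int_{B_r(x_0)} u^2\, dx$ at $x_0=0$. A standard three-ball / propagation-of-smallness argument using $u(0)\geq 1$ and $\|u\|_{L^\infty}\leq C_0$ propagates the doubling bound to all $x_0 \in B_1(0)$. The definition \eqref{def} then forces the order of vanishing to be at most $C N(r_0) \leq C(\|V\|_{L^\infty}^{1/4}+\|\nabla V\|_{L^\infty}+1)$, as claimed. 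The main obstacle will be Step~2: because the two equations are coupled by $v$ and by $Vw$, the standard Rellich computation produces cross terms such as $\int (x\cdot\nabla w)\, v$, $\int Vw\,(x\cdot\nabla v)$, and a term involving $x\cdot\nabla V$. Arranging the weight $r^4$ so that these cross terms close while preserving the optimal $\|V\|^{1/4}_{L^\infty}$ scaling is the delicate bookkeeping at the heart of the argument; it is also the step in which the linear $\|\nabla V\|_{L^\infty}$ contribution enters the final bound.
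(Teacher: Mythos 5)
Your Step 1 identity is correct, and in fact your lifting is literally the paper's (with $\mu^2=\lambda$): $\widetilde u = u(x)e^{\sqrt\lambda t}$ satisfies $(\Delta_{x,t}-\lambda)^2\widetilde u = V\widetilde u$. But the symmetric factorization you then choose, $v=(\Delta-\mu^2)w$ with the system $\Delta w-\mu^2w=v$, $\Delta v-\mu^2 v=Vw$, is exactly what the paper avoids, and this is the heart of the matter: in your system the coupling potential is still $V$ itself, of size $\|V\|_{L^\infty}=\mu^4$. The cross terms this produces in the frequency computation (e.g. $\int Vw\,(x\cdot\nabla v)$, $\int Vwv$) are of order $\|V\|_{L^\infty}$ and cannot be absorbed by your mass terms: Cauchy--Schwarz absorption into $\mu^2\int w^2+\mu^2r^4\int v^2$ would require $\|V\|_{L^\infty}^2\lesssim \mu^4 r^4$, i.e. $\|V\|_{L^\infty}\lesssim r^4$, which fails for large $\|V\|$. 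Left as additive errors they contribute $O(\|V\|_{L^\infty})$ (not $O(\|V\|^{1/2})$, and certainly not $O(\|V\|^{1/4})$) to the integrated doubling exponent --- note also your own bookkeeping is off: $\mu^4 r$ integrates to $\|V\|_{L^\infty}r^2$, not $\sqrt{\|V\|_{L^\infty}}\,r^2$. So your scheme yields at best a vanishing order of size $\|V\|_{L^\infty}$ (comparable to known weaker results), not the claimed $\|V\|^{1/4}$. The paper's key idea is an \emph{asymmetric} splitting, $w=\Delta u-\tfrac12\lambda u$ and $\Delta w-\tfrac32\lambda w=(V-\tfrac14\lambda^2)u$, with $\lambda=2\|V\|_{L^\infty}^{1/2}$, so that the coupling potential becomes $V-\|V\|_{L^\infty}$, whose size is the oscillation of $V$ and hence $\lesssim\|\nabla V\|_{L^\infty}$; this is precisely where $W^{1,\infty}$ regularity is used, it lets $\|V\|_{L^\infty}$ enter only through $\lambda$, and it keeps positive mass on both components so the remaining bad terms are absorbed. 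Your proposal contains no mechanism playing this role.

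A second gap is the a priori bound on the frequency at the initial scale. Almost monotonicity alone only transfers $N$ from large radii to small ones; you still must show $N(r_0)\lesssim \|V\|^{1/4}_{L^\infty}+\|\nabla V\|_{L^\infty}+1$, and "integrating the differential inequality on a small interval" cannot produce this. In the paper it comes from the normalization $u(0)\ge1$, $\|u\|_{L^\infty}\le C_0$, which after lifting gives $|\widetilde u(0,0)|\ge1$ and $\|\widetilde u\|_{L^\infty}\le C_0e^{\sqrt\lambda}$, combined with a quantitative Caccioppoli inequality for $w$ and an $L^\infty$--$L^2$ estimate (De Giorgi iteration) for the system with explicit dependence on $\lambda$ and $\|\nabla V\|_{L^\infty}$; this yields $\log\bigl(H(\tfrac12)/H(\tfrac14)\bigr)\lesssim\sqrt\lambda+\|\nabla V\|_{L^\infty}+1$, which is where the $\|V\|^{1/4}$ in the final answer actually originates. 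Relatedly, you give no mechanism by which $\|\nabla V\|_{L^\infty}$ enters only linearly; in the paper this is arranged by the Kukavica-type weight $(r^2-|z-z_0|^2)^{\alpha}$ with the exponent chosen as $\alpha=\|\nabla V\|_{L^\infty}$, a device absent from your outline. The trace/three-ball reduction in your Step 3 is standard and not the issue; Steps 1--2 as proposed would not prove the theorem.
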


\begin{remark}It is worth mentioning that the exponent $\frac{1}{4}$ of  $\|V\|_{L^{\infty}}$ is optimal based on the analogues  of the eigenvalue problem to bi-Laplacian \cite{K1995,LIN2022,TY2022}.
\end{remark}

Our key argument is to lift the original equation to that with a signed potential, and define a variant frequency function with weights and derive its  almost monotonicity to establishing some  doubling  inequalities.
The rest of the paper is organized as follows. Section 2 is devoted to establish some doubling inequalities with explicit dependence on the Sobolev norm of the potential function $V$. We first  lift the  equation (\ref{equ1}) to that with a  positive potential in $\mathbb{R}^{n+1}$ by introducing an auxiliary function, and then  decompose the resulted fourth-order equation  into a special system of two second order equations (see (\ref{equ}) below). Based on the system, we define a variant frequency function with weights and derive its  monotonicity to establishing some  doubling  inequalities. In Section 3, we prove Theorem \ref{thm}. Precisely, by virtue of doubling inequalities and some elliptic interior estimates, we can establish that quantitative relationship between  the  vanishing order  and the frequency $N(r)$.

\section{Frequency function and the doubling estimates}

In this section, we first transform the  equation (\ref{equ1}) by lifting into  a fourth order equation with a signed potential term, then decompose the resulted fourth order equation  into a suitable system of two second order equations. For the systems, we define a frequency function $N(r)$  and establish its almost monotonicity. Finally, with help of the  almost  monotonicity of $N(r)$, we derive some doubling estimates and a changing center
property of $N(r)$.

Let $u=u(x)$ be a solution to (\ref{equ1}) and  the transformation
\begin{align*}
\widetilde{u}(x,t)=u(x)e^{\sqrt{\lambda}t},
\end{align*}
constant $\lambda>0$ to be determined later.
Then it is easy to see that $\widetilde{u}(x,t)$  satisfies the equation
\begin{align}\label{equ2-1}
\Delta_{x,t}^2\widetilde{u}(x,t)=2\lambda\Delta_{x,t} \widetilde{u}(x,t)+(V(x)-\lambda^2)\widetilde{u}(x,t),
\end{align}
where $\Delta_{x,t}$ denotes the Laplacian with respect to $(x,t)$ in $\mathbb{R}^{n+1}$, i.e. $\Delta_{x,t}=\sum_{i=1}^{n}\partial_{x_ix_i}+\partial_{tt}$.
Noting that the vanishing order of $u(x)$ and $\widetilde{u}(x,t)$ is the same, from now  on, we only need to  study equation (\ref{equ2-1}) and $\widetilde{u}(x,t)$. Let $z=(x,t)$,  $z_0=(x,0)$ and $B_r(z_0)$ denote the ball centered at $z_0$ with radius $r$  in $\mathbb{R}^{n+1}$.  Moreover, For simplicity, we'll use $u(x,t)$ instead of $\widetilde{u}(x,t)$, and $\Delta$ instead of $\Delta_{x,t}$, that is
\begin{align}\label{equ2}
\Delta^2u(x,t)=2\lambda\Delta u(x,t)+(V(x)-\lambda^2)u (x,t)\quad \mbox{in} \quad B_{1}(0,0).
\end{align}
We decompose equation (\ref{equ2}) into the following system of two second-order equations:
\begin{equation}\label{equ}
\left\{\begin{array}{lll}
w(x,t)=\Delta u(x,t)-\frac{1}{2}\lambda u(x,t),\\[2mm]
\Delta w(x,t)-\frac{3}{2}\lambda w(x,t)=\left(V(x)-\frac{1}{4}\lambda^2\right)u(x,t).
\end{array}\right.
\end{equation}
Denoting  $\|V\|_{L^{\infty}}\equiv\|V(x)\|_{L^{\infty}(B_1(0))}$ and $\|\nabla V\|_{L^{\infty}}\equiv\|\nabla V(x)\|_{L^{\infty}(B_1(0))}.$
Taking $\lambda=2\|V\|^{\frac{1}{2}}_{L^{\infty}}$, so
\begin{align}\label{V}
\sqrt{\lambda}=\sqrt{2}\|V\|^{\frac{1}{4}}_{L^{\infty}}, \quad \left\|V-\frac{1}{4}\lambda^2\right\|_{L^{\infty}}\leq \left\|\nabla V\right\|_{L^{\infty}}.
\end{align}
For any $B_{r}(z_0)\subset B_1(0,0)$, define
\begin{equation}\label{H}
H(z_0,r)=\int_{B_r(z_0)}(u^2+w^2)(r^2-|z-z_0|^2)^{\alpha}dz,
\end{equation}
and
\begin{align}\label{I}
I(z_0,r)=2(\alpha+1)\int_{B_r(z_0)}\left(u\nabla u+w\nabla w\right)\cdot(z-z_0) (r^2-|z-z_0|^2)^{\alpha}dz,
\end{align}
the constant $\alpha>0$ will be determined later on.
The function in (\ref{H}) is introduced by Kukavica \cite{K2000} for Ginzburg-Landau equations.

By the divergence theorem and the equations in (\ref{equ}), the energy $I(z_0,r)$ can be rewritten as
\begin{align}\label{I-0}
I(z_0,r)&=\int_{B_r(z_0)}|\nabla u|^2(r^2-|z-z_0|^2)^{\alpha+1}dz+\int_{B_r(z_0)}|\nabla w|^2(r^2-|z-z_0|^2)^{\alpha+1}dz\nonumber\\[2mm]
&+\frac{1}{2}\lambda\int_{B_r(z_0)} u^2(r^2-|z-z_0|^2)^{\alpha+1}dz+\frac{3}{2}\lambda \int_{B_r(z_0)}w^2(r^2-|z-z_0|^2)^{\alpha+1}dz\nonumber\\[2mm]
&+\int_{B_r(z_0)}\left(1+V-\frac{1}{4}\lambda^2\right)uw(r^2-|z-z_0|^2)^{\alpha+1}dz\nonumber\\[2mm]
&:=I_1(r)+I_2(r)+I_3(r)+I_4(r)+I_5(r).
\end{align}
We emphasize  that the term
\begin{align*}
\lambda\int_{B_r(z_0)} u^2(r^2-|z-z_0|^2)^{\alpha+1}dz
\end{align*}
appears in $I(z_0,r)$ mainly because of the special decomposition (\ref{equ}), 
which is crucial to control the term $R_5^4$ in  $I'(r)$ (see (\ref{I5}) below).

The frequency function is defined as
 \begin{align}\label{N}
 N(z_0,r)=\frac{I(z_0,r)}{H(z_0,r)}.
 \end{align}
Here and in what follows, without causing confusion, the center of the ball is omitted in the notation $H(z_0,r), I(z_0,r)$ and $N(z_0,r)$.
Next, we establish an almost monotonicity property of the frequency function, which is the key tool to  prove our main theorem. Precisely,
\begin{lemma}\label{keylemma}
Let $u$ be  a solution of (\ref{equ2}).
For any $z_0\in B_{1}(0,0)$ with $B_{r}(z_0)\subset B_{1}(0,0)$,   then
there exists a constant $C$ depending only on $n$ such that
\begin{equation*}
e^{Cr}\Big(N(z_0,r)+\|\nabla V\|_{L^{\infty}}+1\Big)
\end{equation*}
is  nondecreasing  with respect to $r\in (0,1)$.
\end{lemma}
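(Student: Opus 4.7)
The plan is to derive the differential inequality
$$
N'(r)\ \ge\ -C\bigl(N(r)+\|\nabla V\|_{L^\infty}+1\bigr),
$$
with $C=C(n)$; dividing by $N+\|\nabla V\|_{L^\infty}+1$ and integrating yields $(\log(N+\|\nabla V\|_{L^\infty}+1))'\ge -C$, which is equivalent to the claimed monotonicity of $e^{Cr}(N(z_0,r)+\|\nabla V\|_{L^\infty}+1)$.

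The first step is to compute $H'(r)$ by applying the divergence theorem to the radial vector field $(u^2+w^2)(r^2-|z-z_0|^2)^\alpha(z-z_0)$. The weight vanishes on $\partial B_r(z_0)$ since $\alpha>0$, and the identity $|z-z_0|^2=r^2-(r^2-|z-z_0|^2)$ disentangles the resulting integrals to give
$$
H'(r)=\frac{n+2\alpha}{r}H(r)+\frac{I(r)}{r(\alpha+1)}.
$$
An analogous integration by parts on (\ref{I}), using $(z-z_0)(r^2-|z-z_0|^2)^\alpha = -\tfrac{1}{2(\alpha+1)}\nabla[(r^2-|z-z_0|^2)^{\alpha+1}]$ together with the substitutions $\Delta u=w+\tfrac12\lambda u$ and $\Delta w=\tfrac32\lambda w+(V-\tfrac14\lambda^2)u$ from (\ref{equ}), recovers the decomposition (\ref{I-0}).

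The second step differentiates $I(r)$ in $r$ and applies a Rellich--Pohozaev identity to the gradient pieces $I_1,I_2$: testing each equation in (\ref{equ}) against the radial multiplier $2(z-z_0)\cdot\nabla u$ (resp.\ $2(z-z_0)\cdot\nabla w$) with weight $(r^2-|z-z_0|^2)^{\alpha+1}$ produces the positive quadratic
$$
Q(r):=\frac{4(\alpha+1)}{r}\int_{B_r(z_0)}\!\bigl[((z-z_0)\cdot\nabla u)^2+((z-z_0)\cdot\nabla w)^2\bigr](r^2-|z-z_0|^2)^\alpha\,dz,
$$
plus $\tfrac{n+2\alpha}{r}(I_1+I_2)$ and residues $R(r)$ coming from $\Delta u,\Delta w$ and from the $r$-derivatives of $I_3,I_4,I_5$. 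The Cauchy--Schwarz inequality applied to (\ref{I}) yields $Q(r)H(r)\ge I(r)^2/(r(\alpha+1))$, which combined with the formula for $H'(r)$ above gives
$$
I'(r)H(r)-I(r)H'(r)\ \ge\ -\frac{n+2\alpha}{r}\bigl(I_3+I_4+I_5\bigr)H(r)+R(r)H(r).
$$

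The final, and main, step is to control the right-hand side by $-C(N+\|\nabla V\|_{L^\infty}+1)H(r)^2$. Since $(r^2-|z-z_0|^2)^{\alpha+1}\le r^2(r^2-|z-z_0|^2)^\alpha$ and $|1+V-\tfrac14\lambda^2|\le 1+\|\nabla V\|_{L^\infty}$ by (\ref{V}), one has $|I_5|/(rH)\le C(1+\|\nabla V\|_{L^\infty})$; the non-negative pieces $I_3,I_4$ are handled by pairing them with the analogous contributions in $R(r)$ (coming from $-\tfrac{\lambda}{r}\int((z-z_0)\cdot\nabla u)u\phi$ and $-\tfrac{3\lambda}{r}\int((z-z_0)\cdot\nabla w)w\phi$ after integration by parts), which together produce a combined coefficient $\tfrac{n+2\alpha+2}{r}I_3$ (and similarly for $I_4$) strictly larger than the troublesome $\tfrac{n+2\alpha}{r}I_3$, so that these terms contribute positively. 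The hard part, flagged as $R_5^4$ in the sentence following (\ref{I-0}), is the genuinely delicate cross integral $\int((z-z_0)\cdot\nabla w)(V-\tfrac14\lambda^2)u\phi$, whose prefactor may a priori carry the large quantity $\|V\|_{L^\infty}$; the resolution is to use the positive reservoirs $\tfrac12\lambda\int u^2\phi$ inside $I_3$ and $\tfrac32\lambda\int w^2\phi$ inside $I_4$ that the special splitting (\ref{equ}) introduces. Applying $2|uw|\le\lambda u^2+\lambda^{-1}w^2$ together with $\sqrt\lambda=\sqrt2\,\|V\|_{L^\infty}^{1/4}$ and $\|V-\tfrac14\lambda^2\|_{L^\infty}\le\|\nabla V\|_{L^\infty}$ from (\ref{V}) absorbs every $V$-dependent cross term into $I_3+I_4$, leaving only residues of the form $C((\|\nabla V\|_{L^\infty}+1)H(r)^2+I(r)H(r))$. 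This absorption, which is the true obstacle of the proof, is also the structural reason for the coefficients $\tfrac12$ and $\tfrac32$ in (\ref{equ}) and the choice $\lambda=2\|V\|_{L^\infty}^{1/2}$; fixing $\alpha>0$ sufficiently large depending only on $n$ and dividing by $H(r)^2$ yields the desired bound on $N'(r)$ and completes the proof.
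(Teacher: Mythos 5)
Your overall architecture is the same as the paper's: the derivative formula for $H$, the Rellich--Pohozaev computation of $I'$ with the two equations of (\ref{equ}) substituted for $\Delta u$ and $\Delta w$, the Cauchy--Schwarz step $Q(r)H(r)\ge I(r)^2/((\alpha+1)r)$, the favorable sign of the extra $\frac{2}{r}(I_3+I_4)$, the bound $|I_5|\le C(1+\|\nabla V\|_{L^\infty})r^2H(r)$, and the absorption of $\lambda$-weighted cross terms into the reservoirs $I_3,I_4$ created by the special splitting (\ref{equ}). The genuine gap is in your last step: you fix $\alpha$ as a constant depending only on $n$, whereas the inequality $N'(r)\ge -C\big(N(r)+\|\nabla V\|_{L^\infty}+1\big)$ with $C=C(n)$ requires taking the weight exponent $\alpha=\|\nabla V\|_{L^\infty}$ (the paper does exactly this, and uses that choice in all later sections). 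The point is that the residues produced by the second equation in (\ref{equ}) carry explicit factors of $\|\nabla V\|_{L^\infty}$ and are only mitigated by the factor $\frac{1}{\alpha+2}$ coming from integrating the weight up to the power $\alpha+2$: for instance $\frac{1}{2(\alpha+2)r}\int (V-\frac14\lambda^2)^2u^2(r^2-|z-z_0|^2)^{\alpha+2}dz\lesssim \frac{\|\nabla V\|_{L^\infty}^2}{\alpha+2}\,r^3H(r)$, the term with $\lambda(V-\frac14\lambda^2)uw$ is $\lesssim \frac{\|\nabla V\|_{L^\infty}}{\alpha+2}\,r\,(I_3+I_4)$, and the terms with $(V-\frac14\lambda^2)\nabla u\cdot\nabla w$ and $u\nabla V\cdot\nabla w$ give $\frac{\|\nabla V\|_{L^\infty}}{\alpha+2}(I_1+I_2)$ plus lower order. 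With $\alpha=\alpha(n)$ these are of size $\|\nabla V\|_{L^\infty}^2H$, $\|\nabla V\|_{L^\infty}(I_3+I_4)$ and $\|\nabla V\|_{L^\infty}(I_1+I_2)$, which are \emph{not} dominated by $C(\|\nabla V\|_{L^\infty}+1)H+CI$; you would only obtain monotonicity of something like $e^{C(1+\|\nabla V\|_{L^\infty})r}(\cdots)$ or an additive $\|\nabla V\|_{L^\infty}^2$, not the stated lemma. Only $\alpha\simeq\|\nabla V\|_{L^\infty}$ gives $\|\nabla V\|_{L^\infty}/(\alpha+2)\le1$ and $\|\nabla V\|_{L^\infty}^2/(\alpha+2)\le\|\nabla V\|_{L^\infty}$, and this is precisely the paper's device (``variant frequency function with weights'').

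A second, related flaw is the concrete absorption inequality you propose: applied to the term with $\lambda(V-\frac14\lambda^2)uw$, the split $2|uw|\le \lambda u^2+\lambda^{-1}w^2$ produces $\lambda^2|V-\frac14\lambda^2|u^2\sim \|V\|_{L^\infty}\|\nabla V\|_{L^\infty}u^2$, which the reservoir $\frac{\lambda}{2}\int u^2(r^2-|z-z_0|^2)^{\alpha+1}dz$ (of strength $\|V\|_{L^\infty}^{1/2}u^2$) cannot absorb; the correct split is $\lambda|uw|\le\frac{\lambda}{2}(u^2+w^2)$, with the leftover factor $\|V-\frac14\lambda^2\|_{L^\infty}/(\alpha+2)\le 1$ again supplied by $\alpha=\|\nabla V\|_{L^\infty}$. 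Two minor points: after the lifting the ambient dimension is $n+1$, so the coefficient in $H'$ is $\frac{2\alpha+n+1}{r}$ rather than $\frac{n+2\alpha}{r}$ (harmless, as it cancels in $N'$), and the troublesome term $R_5^4$ is $\frac{1}{(\alpha+2)r}\int\lambda(V-\frac14\lambda^2)uw(r^2-|z-z_0|^2)^{\alpha+2}dz$, i.e.\ the post-integration-by-parts term, not the cross integral with $(z-z_0)\cdot\nabla w$ that you describe.
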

\begin{proof}
In order to obtain  the almost  monotonicity of $N(r)$,
we shall calculate and estimate $H'(r)$ and $I'(r)$, respectively.

{\it Step 1.}\ Calculate $H'(r)$.  Taking the derivative for $H(r)$ with respect to $r$, one has
\begin{align}\label{H'}
H'(r)&=2\alpha r\int_{B_r(z_0)}\left(u^2+w^2\right)(r^2-|z-z_0|^2)^{\alpha-1}dz\nonumber\\[2mm]
&=\frac{2\alpha}{r}\int_{B_r(z_0)}\left(u^2+w^2\right)(r^2-|z-z_0|^2)^{\alpha}dz\nonumber\\[2mm]
&+\frac{2\alpha}{r}\int_{B_r(z_0)}\left(u^2+w^2\right)|z-z_0|^2(r^2-|z-z_0|^2)^{\alpha-1}dz\nonumber\\[2mm]
&:=\frac{2\alpha}{r}H(r)+K_1.
\end{align}
Using the  following identity,
\begin{align*}
|z-z_0|^2(r^2-|z-z_0|^2)^{\alpha-1}=-\frac{1}{2\alpha}(z-z_0)\cdot\nabla(r^2-|z-z_0|^2)^{\alpha},
\end{align*}
and  integrating by parts,  it holds
\begin{align}\label{K}
K_1&=-\frac{1}{r}\int_{B_r(z_0)}\left(u^2+w^2\right)(z-z_0)\cdot\nabla(r^2-|z-z_0|^2)^{\alpha}dz\nonumber\\[2mm]
&=\frac{1}{r}\int_{B_r(z_0)}\mbox{div}\left((u^2+w^2)(z-z_0)\right)(r^2-|z-z_0|^2)^{\alpha}dz\nonumber\\[2mm]
&=\frac{n+1}{r}\int_{B_r(z_0)}(u^2+w^2)(r^2-|z-z_0|^2)^{\alpha}dz\nonumber\\[2mm]
&+\frac{2}{r}\int_{B_r(z_0)}(u\nabla u+w\nabla w)\cdot(z-z_0)(r^2-|z-z_0|^2)^{\alpha}dz.
\end{align}
Substituting (\ref{K}) into (\ref{H'}), and recalling the definition of $I(r)$ in (\ref{I}), one gets
\begin{align}\label{fin-1}
H'(r)=\frac{2\alpha+n+1}{r}H(r)+\frac{1}{(\alpha+1)r}I(r).
\end{align}
{\it Step 2.} \ Calculate $I'(r)$.  Recalling the definition of $I(r)$ in (\ref{I-0}),  we compute $I_i'(r)$($i=1,2,\cdots,5$) one by one.
\begin{align}\label{0}
I_1'(r)&=2(\alpha+1)r\int_{B_r(z_0)}|\nabla u|^2(r^2-|z-z_0|^2)^{\alpha}dz\nonumber\\[2mm]
&=\frac{2(\alpha+1)}{r}\int_{B_r(z_0)}|\nabla u|^2(r^2-|z-z_0|^2)^{\alpha+1}dz\nonumber\\[2mm]
&+\frac{2(\alpha+1)}{r}\int_{B_r(z_0)}|\nabla u|^2|z-z_0|^2(r^2-|z-z_0|^2)^{\alpha}dz.
\end{align}
Using the  following identity,
\begin{align}\label{identity}
|z-z_0|^2(r^2-|z-z_0|^2)^{\alpha}=-\frac{1}{2(\alpha+1)}(z-z_0)\cdot\nabla(r^2-|z-z_0|^2)^{\alpha+1},
\end{align}
and  integrating by parts, then the last term of (\ref{0}) becomes
\begin{align}\label{0-1}
&\quad \frac{2(\alpha+1)}{r}\int_{B_r(z_0)}|\nabla u|^2(r^2-|z-z_0|^2)^{\alpha}|z-z_0|^2dz\nonumber\\[2mm]
&=\frac{1}{r}\int_{B_r(z_0)}\mbox{div}\left(|\nabla u|^2(z-z_0)\right)(r^2-|z-z_0|^2)^{\alpha+1}dz\nonumber\\[2mm]
&=\frac{n+1}{r}\int_{B_r(z_0)}|\nabla u|^2(r^2-|z-z_0|^2)^{\alpha+1}dz+\frac{2}{r}\int_{B_r(z_0)}\partial_{i}u\partial_{ij} u(z-z_0)_{j}(r^2-|z-z_0|^2)^{\alpha+1}dz
\nonumber\\[2mm]
&=\frac{n+1}{r}\int_{B_r(z_0)}|\nabla u|^2(r^2-|z-z_0|^2)^{\alpha+1}dz-\frac{2}{r}\int_{B_r(z_0)}\partial_{j} u\partial_{i}\left(\partial_{i}u(z-z_0)_{j}(r^2-|z-z_0|^2)^{\alpha+1}\right)dz
\nonumber\\[2mm]
&=\frac{n-1}{r}\int_{B_r(z_0)}|\nabla u|^2(r^2-|z-z_0|^2)^{\alpha+1}dz+\frac{4(\alpha+1)}{r}\int_{B_r(z_0)}\Big(\nabla u\cdot(z-z_0)\Big)^2(r^2-|z-z_0|^2)^{\alpha}dz\nonumber\\[2mm]
&\quad -\frac{2}{r}\int_{B_r(z_0)}\Delta u\nabla u\cdot(z-z_0) (r^2-|z-z_0|^2)^{\alpha+1}dz\nonumber\\[2mm]
&=\frac{n-1}{r}\int_{B_r(z_0)}|\nabla u|^2(r^2-|z-z_0|^2)^{\alpha+1}dz+\frac{4(\alpha+1)}{r}\int_{B_r(z_0)}\Big(\nabla u\cdot(z-z_0)\Big)^2(r^2-|z-z_0|^2)^{\alpha}dz\nonumber\\[2mm]
&\quad -\frac{1}{(\alpha+2)r}\int_{B_r(z_0)}\mbox{div}\left(\Delta u\nabla u\right)(r^2-|z-z_0|^2)^{\alpha+2}dz,
\end{align}
where we have used the following fact and integration  by parts again in the last equality,
\begin{align}\label{id2}
(z-z_0) (r^2-|z-z_0|^2)^{\alpha+1}=-\frac{1}{2(\alpha+2)}\nabla (r^2-|z-z_0|^2)^{\alpha+2}.
\end{align}
Putting (\ref{0-1}) into (\ref{0}) yields
\begin{align}\label{0-2}
I_1'(r)
&=\frac{2\alpha+n+1}{r}I_1(r)+\frac{4(\alpha+1)}{r}\int_{B_r(z_0)}\Big(\nabla u\cdot(z-z_0)\Big)^2(r^2-|z-z_0|^2)^{\alpha}dz\nonumber\\[2mm]
&-\frac{1}{(\alpha+2)r}\int_{B_r(z_0)}\mbox{div}\left(\Delta u\nabla u\right)(r^2-|z-z_0|^2)^{\alpha+2}dz.
\end{align}
Furthermore, for the last term on the right hand side of (\ref{0-2}), by using  the first equation in (\ref{equ}),   it follows that
\begin{align}\label{I1}
I_1'(r)
&=\frac{2\alpha+n+1}{r}I_1(r)+\frac{4(\alpha+1)}{r}\int_{B_r(z_0)}\Big(\nabla u\cdot(z-z_0)\Big)^2(r^2-|z-z_0|^2)^{\alpha}dz\nonumber\\[2mm]
&-\frac{1}{(\alpha+2)r}\int_{B_r(z_0)}w^2(r^2-|z-z_0|^2)^{\alpha+2}dz\nonumber\\[2mm]
&-\frac{1}{(\alpha+2)r}\int_{B_r(z_0)}\lambda u w(r^2-|z-z_0|^2)^{\alpha+2}dz\nonumber\\[2mm]
&-\frac{1}{(\alpha+2)r}\int_{B_r(z_0)}\nabla u\cdot\nabla w(r^2-|z-z_0|^2)^{\alpha+2}dz\nonumber\\[2mm]
&-\frac{1}{2(\alpha+2)r}\int_{B_r(z_0)}\lambda |\nabla u|^2(r^2-|z-z_0|^2)^{\alpha+2}dz\nonumber\\[2mm]
&-\frac{1}{4(\alpha+2)r}\int_{B_r(z_0)}\lambda^2u^2(r^2-|z-z_0|^2)^{\alpha+2}dz\nonumber\\[2mm]
&:=\frac{2\alpha+n+1}{r}I_1(r)+\frac{4(\alpha+1)}{r}\int_{B_r(z_0)}\Big(\nabla u\cdot(z-z_0)\Big)^2(r^2-|z-z_0|^2)^{\alpha}dz\nonumber\\[2mm]
&+R_{1}^{1}+R_{1}^{2}+R_{1}^{3}+R_{1}^{4}+R_{1}^{5}.
\end{align}
In a similar way, we have
\begin{align*}
I_2'(r)
&=\frac{2\alpha+n+1}{r}I_2(r)+\frac{4(\alpha+1)}{r}\int_{B_r(z_0)}\Big(\nabla w\cdot(z-z_0)\Big)^2(r^2-|z-z_0|^2)^{\alpha}dz\nonumber\\[2mm]
&-\frac{1}{(\alpha+2)r}\int_{B_r(z_0)}\mbox{div}\left(\Delta w\nabla w\right)(r^2-|z-z_0|^2)^{\alpha+2}dz.
\end{align*}
Moreover, by using the second equation  in (\ref{equ}),   it yields
\begin{align}
I_2'(r)
&=\frac{2\alpha+n+1}{r}I_2(r)+\frac{4(\alpha+1)}{r}\int_{B_r(z_0)}\Big(\nabla w\cdot(z-z_0)\Big)^2(r^2-|z-z_0|^2)^{\alpha}dz\nonumber\\[2mm]
&-\frac{1}{(\alpha+2)r}\int_{B_r(z_0)}\left(V-\frac{1}{4}\lambda^2\right)^2u^2(r^2-|z-z_0|^2)^{\alpha+2}dz\nonumber\\[2mm]
&-\frac{3}{(\alpha+2)r}\int_{B_r(z_0)}\lambda\left(V-\frac{1}{4}\lambda^2\right)uw(r^2-|z-z_0|^2)^{\alpha+2}dz\nonumber\\[2mm]
&-\frac{1}{(\alpha+2)r}\int_{B_r(z_0)}u\nabla V\cdot\nabla w(r^2-|z-z_0|^2)^{\alpha+2}dz\nonumber\\[2mm]
&-\frac{1}{(\alpha+2)r}\int_{B_r(z_0)}\left(V-\frac{1}{4}\lambda^2\right)\nabla u\cdot\nabla w(r^2-|z-z_0|^2)^{\alpha+2}dz\nonumber\\[2mm]
&-\frac{3}{2(\alpha+2)r}\int_{B_r(z_0)}\lambda |\nabla w|^2(r^2-|z-z_0|^2)^{\alpha+2}dz\nonumber\\[2mm]
&-\frac{9}{4(\alpha+2)r}\int_{B_r(z_0)}\lambda^2 w^2(r^2-|z-z_0|^2)^{\alpha+2}dz\nonumber\\[2mm]
&:=\frac{2\alpha+n+1}{r}I_2(r)+\frac{4(\alpha+1)}{r}\int_{B_r(z_0)}\Big(\nabla w\cdot(z-z_0)\Big)^2(r^2-|z-z_0|^2)^{\alpha}dz\nonumber\\[2mm]
&+R_{2}^{1}+R_{2}^{2}+R_{2}^{3}+R_{2}^{4}+R_{2}^{5}+R_{2}^{6}.
\end{align}
For
\begin{align*}
I_3(r)=\frac{1}{2}\lambda\int_{B_r(z_0)} u^2(r^2-|z-z_0|^2)^{\alpha+1}dz,
\end{align*}
we denote $I_3(r)=\frac{1}{2}\lambda \widetilde{I_3}(r)$.  Taking the derivative for $\widetilde{I_3}(r)$ with respect to $r$, using the identity (\ref{identity}) and integrating by parts, one obtains
\begin{align}\label{I3-1}
\widetilde{I_3}'(r)&=2(\alpha+1)r\int_{B_r(z_0)} u^2(r^2-|z-z_0|^2)^{\alpha}dz\nonumber\\[2mm]
&=\frac{2(\alpha+1)}{r}\int_{B_r(z_0)} u^2(r^2-|z-z_0|^2)^{\alpha+1}dz+\frac{2(\alpha+1)}{r}\int_{B_r(z_0)} u^2(r^2-|z-z_0|^2)^{\alpha}|z-z_0|^2dz\nonumber\\[2mm]
&=\frac{2(\alpha+1)}{r}\int_{B_r(z_0)} u^2(r^2-|z-z_0|^2)^{\alpha+1}dz+\frac{1}{r}\int_{B_r(z_0)}
\mbox{div}\Big(u^2(z-z_0)\Big)(r^2-|z-z_0|^2)^{\alpha+1}dz\nonumber\\[2mm]
&=\frac{2(\alpha+1)+n+1}{r}\widetilde{I_3}(r)+\frac{2}{r}\int_{B_r(z_0)} u\nabla u\cdot(z-z_0)(r^2-|z-z_0|^2)^{\alpha+1}dz.
\end{align}
Furthermore,  by using the identity (\ref{id2}),  integrating by parts and using  the first equation  in (\ref{equ}), the last term on  (\ref{I3-1}) becomes
\begin{align}\label{I3-0}
&\frac{2}{r}\int_{B_r(z_0)} u\nabla u\cdot(z-z_0)(r^2-|z-z_0|^2)^{\alpha+1}dz\nonumber\\[2mm]
&=\frac{1}{(\alpha+2)r}\int_{B_r(z_0)} \mbox{div}\Big(u\nabla u\Big)(r^2-|z-z_0|^2)^{\alpha+2}dz\nonumber\\[2mm]
&=\frac{1}{(\alpha+2)r}\int_{B_r(z_0)}|\nabla u|^2(r^2-|z-z_0|^2)^{\alpha+2}dz
+\frac{1}{2(\alpha+2)r}\lambda\int_{B_r(z_0)}u^2(r^2-|z-z_0|^2)^{\alpha+2}dz\nonumber\\[2mm]
&+\frac{1}{(\alpha+2)r}\int_{B_r(z_0)}uw(r^2-|z-z_0|^2)^{\alpha+2}dz.
\end{align}
Plugging (\ref{I3-0}) back in (\ref{I3-1}), one gets
\begin{align}
I_3'(r)
&=\frac{2\alpha+n+1}{r}I_3(r)+\frac{2}{r}I_{3}(r)\nonumber\\[2mm]
&+\frac{1}{2(\alpha+2)r}\lambda\int_{B_r(z_0)} uw(r^2-|z-z_0|^2)^{\alpha+2}dz\nonumber\\[2mm]
&+\frac{1}{2(\alpha+2)r}\lambda\int_{B_r(z_0)} |\nabla u|^2(r^2-|z-z_0|^2)^{\alpha+2}dz\nonumber\\[2mm]
&+\frac{1}{4(\alpha+2)r}\lambda^2\int_{B_r(z_0)} u^2(r^2-|z-z_0|^2)^{\alpha+2}dz\nonumber\\[2mm]
&:=\frac{2\alpha+n+1}{r}I_3(r)+\frac{2}{r}I_{3}(r)+R_3^1+R_3^2+R_3^3.
\end{align}
Similarly, one has
\begin{align}
I_4'(r)
&=\frac{2\alpha+n+1}{r}I_4(r)+\frac{2}{r}I_{4}(r)\nonumber\\[2mm]
&+\frac{3}{2(\alpha+2)r}\lambda\int_{B_r(z_0)}\left(V-\frac{1}{4}\lambda^2\right) uw(r^2-|z-z_0|^2)^{\alpha+2}dz\nonumber\\[2mm]
&+\frac{3}{2(\alpha+2)r}\lambda\int_{B_r(z_0)} |\nabla w|^2(r^2-|z-z_0|^2)^{\alpha+2}dz\nonumber\\[2mm]
&+\frac{9}{4(\alpha+2)r}\lambda^2\int_{B_r(z_0)} w^2(r^2-|z-z_0|^2)^{\alpha+2}dz\nonumber\\[2mm]
&:=\frac{2\alpha+n+1}{r}I_4(r)+\frac{2}{r}I_{4}(r)+R_4^1+R_4^2+R_4^3,
\end{align}
and
\begin{align}\label{i-5}
I_5'(r)&=\frac{2\alpha+n+1}{r}I_5(r)+\frac{2}{r}I_{5}(r)
+\frac{1}{r}\int_{B_r(z_0)} uw\nabla V\cdot(z-z_0)(r^2-|z-z_0|^2)^{\alpha+1}dz\nonumber\\[2mm]
&+\frac{1}{r}\int_{B_r(z_0)} \left(V-\frac{1}{4}\lambda^2+1\right)(u\nabla w+w\nabla w)\cdot(z-z_0)(r^2-|z-z_0|^2)^{\alpha+1}dz.
\end{align}
Denoting the last term on the right-hand side of (\ref{i-5}) as $\frac{1}{r} K_5$. Similar to (\ref{I3-0}), and by virtue of equation (\ref{equ}),
the term $K_5$ can be expressed as follows
\begin{align}
K_5&=\frac{1}{2(\alpha+2)}\int_{B_r(z_0)}\left(V-\frac{1}{4}\lambda^2\right)^2 u^2(r^2-|z-z_0|^2)^{\alpha+2}dz\nonumber\\[2mm]
&+\frac{1}{2(\alpha+2)}\int_{B_r(z_0)}\left(V-\frac{1}{4}\lambda^2\right) u^2(r^2-|z-z_0|^2)^{\alpha+2}dz\nonumber\\[2mm]
&+\frac{1}{(\alpha+2)}\int_{B_r(z_0)}\lambda\left(V-\frac{1}{4}\lambda^2+1\right) uw(r^2-|z-z_0|^2)^{\alpha+2}dz\nonumber\\[2mm]
&+\frac{1}{2(\alpha+2)}\int_{B_r(z_0)}\left(V-\frac{1}{4}\lambda^2+1\right) w^2(r^2-|z-z_0|^2)^{\alpha+2}dz\nonumber\\[2mm]
&+\frac{1}{(\alpha+2)}\int_{B_r(z_0)}\left(V-\frac{1}{4}\lambda^2+1\right) \nabla u\cdot\nabla w(r^2-|z-z_0|^2)^{\alpha+2}dz\nonumber\\[2mm]
&+\frac{1}{2(\alpha+2)}\int_{B_r(z_0)}(u\nabla w+w\nabla u)\cdot\nabla V(r^2-|z-z_0|^2)^{\alpha+2}dz.\nonumber
\end{align}
Plugging the above back in (\ref{i-5}) yields
\begin{align}\label{I5}
I_5'(r)&=\frac{2\alpha+n+1}{r}I_5(r)+\frac{2}{r}I_{5}(r)\nonumber\\[2mm]
&+\frac{1}{r}\int_{B_r(z_0)} uw\nabla V\cdot(z-z_0)(r^2-|z-z_0|^2)^{\alpha+1}dz\nonumber\\[2mm]
&+\frac{1}{2(\alpha+2)r}\int_{B_r(z_0)}\left(V-\frac{1}{4}\lambda^2\right)^2 u^2(r^2-|z-z_0|^2)^{\alpha+2}dz\nonumber\\[2mm]
&+\frac{1}{2(\alpha+2)r}\int_{B_r(z_0)}\left(V-\frac{1}{4}\lambda^2\right) u^2(r^2-|z-z_0|^2)^{\alpha+2}dz\nonumber\\[2mm]
&+\frac{1}{(\alpha+2)r}\int_{B_r(z_0)}\lambda\left(V-\frac{1}{4}\lambda^2\right) uw(r^2-|z-z_0|^2)^{\alpha+2}dz\nonumber\\[2mm]
&+\frac{1}{(\alpha+2)r}\int_{B_r(z_0)}\lambda uw(r^2-|z-z_0|^2)^{\alpha+2}dz\nonumber\\[2mm]
&+\frac{1}{2(\alpha+2)r}\int_{B_r(z_0)}\left(V-\frac{1}{4}\lambda^2\right) w^2(r^2-|z-z_0|^2)^{\alpha+2}dz\nonumber\\[2mm]
&+\frac{1}{2(\alpha+2)r}\int_{B_r(z_0)} w^2(r^2-|z-z_0|^2)^{\alpha+2}dz\nonumber\\[2mm]
&+\frac{1}{(\alpha+2)r}\int_{B_r(z_0)}\left(V-\frac{1}{4}\lambda^2\right) \nabla u\cdot\nabla w(r^2-|z-z_0|^2)^{\alpha+2}dz\nonumber\\[2mm]
&+\frac{1}{(\alpha+2)r}\int_{B_r(z_0)}\nabla u\cdot\nabla w(r^2-|z-z_0|^2)^{\alpha+2}dz\nonumber\\[2mm]
&+\frac{1}{2(\alpha+2)r}\int_{B_r(z_0)}(u\nabla w+w\nabla u)\cdot\nabla V(r^2-|z-z_0|^2)^{\alpha+2}dz\nonumber\\[2mm]
&:=\frac{2\alpha+n+1}{r}I_5(r)+\frac{2}{r}I_{5}(r)+R_{5}^{1}+R_{5}^{2}+\cdots+R_{5}^{10}.
\end{align}
Summing the $I'_i(r)$ ($i=1,2\cdots,5$) in (\ref{I1})-(\ref{I5}),
and noting that some ``bad items" in $I'_1(r)$ and $I'_2(r)$ will be offset by the counterparts in $I'_3(r)$ and $I'_4(r)$,
i.e.,
\begin{align}
R_1^5+R_3^3=0, \quad R_1^4+R_3^2=0,\qquad   R_2^6+R_4^3=0, \quad   R_2^5+R_4^2=0,
\end{align}
we finally obtain
\begin{align}\label{I'}
I'(r)&=\frac{2\alpha+n+1}{r}I(r)+\frac{4(\alpha+1)}{r}\int_{B_r(z_0)}\Big(\nabla u\cdot(z-z_0)\Big)^2(r^2-|z-z_0|^2)^{\alpha}dz\nonumber\\[2mm]
&+\frac{4(\alpha+1)}{r}\int_{B_r(z_0)}\Big(\nabla w\cdot(z-z_0)\Big)^2(r^2-|z-z_0|^2)^{\alpha}dz\nonumber\\[2mm]
&+\frac{2}{r}I_3(r)+\frac{2}{r}I_{4}(r)+\frac{2}{r}I_{5}(r)\nonumber\\[2mm]
&+R_{1}^{1}+R_{1}^{2}+R_{1}^{3}+R_{2}^{1}+R_{2}^{2}+R_{2}^{3}+R_{2}^{4}
+R_{3}^{1}+R_{4}^{1}\nonumber\\[2mm]
&+R_{5}^{1}+R_{5}^{2}+R_{5}^{3}+R_{5}^{4}+R_{5}^{5}+R_{5}^{6}+R_{5}^{7}+R_{5}^{8}+R_{5}^{9}+R_{5}^{10}.
\end{align}
Next, we shall estimate the  ``bad'' terms as $\frac{2}{r}I_5(r), R_{1}^1,\cdots,R_5^{10}$ in $I'(r)$. Firstly,
\begin{align*}
R_{1}^1+R_5^7=-\frac{1}{2(\alpha+2)r}\int_{B_r(z_0)}w^2(r^2-|z-z_0|^2)^{\alpha+2}dz,
\end{align*}
which yields
\begin{align}\label{est-R}
\left|R_{1}^1+R_5^7\right|
\leq \frac{1}{2(\alpha+2)}r^{3} H(r)\leq  r^{3} H(r),
\end{align}
thanks to $\alpha>0$.
Similarly,
\begin{align*}
R_{1}^2+R_3^1=-\frac{1}{2(\alpha+2)r}\int_{B_r(z_0)}\lambda uw(r^2-|z-z_0|^2)^{\alpha+2}dz,
\end{align*}
Then, by using Cauchy's inequality, one has
\begin{align}
\left|R_{1}^2+R_3^{1}\right|
&\leq \frac{\sqrt{3}}{6} \frac{1}{\alpha+2}r\int_{B_r(z_0)}\left(\frac{1}{2}\lambda u^2+\frac{3}{2}\lambda w^2\right)(r^2-|z-z_0|^2)^{\alpha+1}dz\nonumber\\[2mm]
&\leq  r \Big(I_3(r)+I_{4}(r)\Big).
\end{align}
The term $R_{1}^3$ can be controlled by
\begin{align}
\left|R_{1}^3\right|&\leq  \frac{1}{2(\alpha+2)}r\int_{B_r(z_0)}\left(|\nabla u|^2+|\nabla w|^2\right)(r^2-|z-z_0|^2)^{\alpha+1}dz\nonumber\\[2mm]
&\leq  r \Big(I_1(r)+I_{2}(r)\Big).
\end{align}
Next,  in order to estimate  terms such as $R_{2}^1$, $R_{2}^3$, $R_{2}^2$, $R_{2}^4$, $R_5^4$ and $R_5^{10}$ by $\|\nabla V\|_{L^{\infty}}H(r)$ and $I(r)$, we shall take  $\alpha=\|\nabla V\|_{L^{\infty}}>0$. Thus,
recalling (\ref{V}), we obtain
\begin{align}
\left|R_{2}^1+R_5^2\right|&=\left|\frac{1}{2(\alpha+2)r}\int_{B_r(z_0)}\left(V-\frac{1}{4}\lambda^2\right)^2 u^2(r^2-|z-z_0|^2)^{\alpha+2}dz\right|\nonumber\\[2mm]
&\leq \frac{1}{2(\alpha+2)}r^{3}\|\nabla V\|_{L^{\infty}}^2\int_{B_r(z_0)}u^2(r^2-|z-z_0|^2)^{\alpha}dz\nonumber\\[2mm]
&\leq Cr^{3}\|\nabla V\|_{L^{\infty}} H(r),
\end{align}
and
\begin{align}
\left|R_{2}^2+R_4^1\right|&=\left|\frac{3}{2(\alpha+2)r}\int_{B_r(z_0)}\lambda\left(V-\frac{1}{4}\lambda^2\right) uw(r^2-|z-z_0|^2)^{\alpha+2}dz\right|\nonumber\\[2mm]
&\leq  \frac{ \sqrt{3}}{2(\alpha+2)}r\|\nabla V\|_{L^{\infty}}\int_{B_r(z_0)}\left(\frac{1}{2}\lambda u^2+\frac{3}{2}\lambda w^2\right)(r^2-|z-z_0|^2)^{\alpha+1}dz\nonumber\\[2mm]
&\leq Cr\Big(I_3(r)+I_{4}(r)\Big).
\end{align}
In a similar way, it holds
\begin{align}
\left|R_{2}^3\right|&\leq \frac{1}{2(\alpha+2)r}\|\nabla V\|_{L^{\infty}}\int_{B_r(z_0)}\left(u^2+|\nabla w|^2\right)(r^2-|z-z_0|^2)^{\alpha+2}dz\nonumber\\[2mm]
&\leq  C\Big(r^{3} H(r)+rI_{2}(r)\Big),
\end{align}
and
\begin{align}
\left|R_{2}^4\right|
&\leq  \frac{1}{2(\alpha+2)}r\|\nabla V\|_{L^{\infty}} \Big(I_{1}(r)+I_{2}(r)\Big)\nonumber\\[2mm]
&\leq Cr\Big(I_{1}(r)+I_{2}(r)\Big).
\end{align}
Terms $R_5^3,R_5^4,R_5^5, R_5^6, R_5^7, R_5^{10}$ can be controlled as
\begin{align}
&\left|R_5^1\right|\leq \frac{1}{2}\|\nabla V\|_{L^{\infty}}r^2 \int_{B_r(z_0)}\left(u^2+w^2\right)(r^2-|z-z_0|^2)^{\alpha}dz\nonumber\\[2mm]
&\quad\quad\leq \frac{r^2}{2}\|\nabla V\|_{L^{\infty}}H(r),\nonumber\\[2mm]
&\left|R_5^3\right|\leq \frac{r^3}{2(\alpha+2)}\|\nabla V\|_{L^{\infty}}H(r)\leq Cr^3H(r),\nonumber\\[2mm]
&\left|R_5^4\right|\leq \frac{1}{2(\alpha+2)}r\|\nabla V\|_{L^{\infty}}\int_{B_r(z_0)}\lambda \left(u^2+w^2\right)(r^2-|z-z_0|^2)^{\alpha+1}dz\nonumber\\[2mm]
&\quad\quad\leq Cr\Big(I_3(r)+I_4(r)\Big),\nonumber\\[2mm]
&\left|R_5^5\right|\leq \frac{1}{2(\alpha+2)}r\int_{B_r(z_0)}\lambda (u^2+w^2)(r^2-|z-z_0|^2)^{\alpha+1}dz\nonumber\\[2mm]
&\quad\quad\leq r \Big(I_3(r)+I_4(r)\Big),\nonumber\\[2mm]
&\left|R_5^6\right|\leq \frac{1}{2(\alpha+2)}r^3\|\nabla V\|_{L^{\infty}}H(r)\leq Cr^3H(r),\nonumber\\[2mm]
&\left|R_5^7\right|\leq \frac{1}{2(\alpha+2)}r^3 H(r)\leq r^3H(r),
\end{align}
and
\begin{align}
\left|R_5^{10}\right|&\leq \frac{r^3}{4(\alpha+2)}\|\nabla V\|_{L^{\infty}} H(r)+\frac{r}{4(\alpha+2)}\|\nabla V\|_{L^{\infty}}\Big(I_1(r)+I_2(r)\Big)\nonumber\\[2mm]
&\leq C\Big(r^3H(r)+r(I_1(r)+I_2(r))\Big).
\end{align}
Recalling
\begin{align*}
I_5(r)=\int_{B_r(z_0)}\left(1+V-\frac{1}{4}\lambda^2\right)uw(r^2-|z-z_0|^2)^{\alpha+1}dz,
\end{align*}
and using the Cauchy inequality and (\ref{V}) again, we derive that
 \begin{align}\label{est-I5}
\left|\frac{2}{r}I_5(r)\right|
&\leq Cr\int_{B_r(z_0)}(u^2+w^2)(r^2-|z-z_0|^2)^{\alpha}dz\nonumber\\[2mm]
&+C\|\nabla V\|_{L^{\infty}}r\int_{B_r(z_0)}(u^2+w^2)(r^2-|z-z_0|^2)^{\alpha}dz\nonumber\\[2mm]
&\leq C(1+\|\nabla V\|_{L^{\infty}})rH(r),
\end{align}
and
\begin{align*}
\left|I_5(r)\right|
\leq C(1+\|\nabla V\|_{L^{\infty}})r^2H(r).
\end{align*}
Moreover,  recalling the expression of $I(r)$ in (\ref{I-0}), and noting that $I_i(r)\geq0$ $(i=1,\cdots, 4)$, it is easy to see that
\begin{align}\label{1+2}
I_1(r)+I_2(r)\leq I(r)-I_5(r)\leq I(r)+C\left(1+\|V\|_{L^{\infty}}\right)H(r),
\end{align}
and
\begin{align}\label{3+4}
I_3(r)+I_4(r)\leq I(r)-I_5(r)\leq I(r)+C\left(1+\|V\|_{L^{\infty}}\right)H(r).
\end{align}
Summing the estimates (\ref{est-R})-(\ref{est-I5})  and using (\ref{1+2}), (\ref{3+4}),   we finally obtain  the estimates of $R_i^{j}$ in (\ref{I'}), that is,
\begin{align*}
\left|R_i^{j}\right|
\leq  C\Big(1+\|V\|_{L^{\infty}}\Big)H(r)+CI(r), \quad \mbox{for}\quad 0<r<1.
\end{align*}
Therefore, $I'(r)$ can be estimated as
\begin{align}\label{fin-2}
I'(r)
&\geq \frac{2\alpha+n+1}{r}I(r)+\frac{4(\alpha+1)}{r}\int_{B_r(z_0)}\Big(\nabla u\cdot(z-z_0)\Big)^2(r^2-|z-z_0|^2)^{\alpha}dz\nonumber\\[2mm]
&+\frac{4(\alpha+1)}{r}\int_{B_r(z_0)}\Big(\nabla w\cdot(z-z_0)\Big)^2(r^2-|z-z_0|^2)^{\alpha}dz\nonumber\\[2mm]
&- C\Big(1+\|V\|_{L^{\infty}}\Big)H(r)-CI(r).
\end{align}

{\it Step 3.}\ Estimate $N'(r)$. Combining $H'(r)$ in (\ref{fin-1}) and $I'(r)$ in (\ref{fin-2}), one has
\begin{align*}\label{F}
N'(r)&=\frac{I'(r)H(r)-H'(r)I(r)}{H^2(r)}\nonumber\\[2mm]
&\geq
\frac{1}{H^2(r)}\Big\{
\Big[\frac{4(\alpha+1)}{r}\int_{B_r(z_0)}\Big(\nabla u\cdot(z-z_0)\Big)^2(r^2-|z-z_0|^2)^{\alpha}dz\nonumber\\[2mm]
&+\frac{4(\alpha+1)}{r}\int_{B_r(z_0)}\Big(\nabla w\cdot(z-z_0)\Big)^2(r^2-|z-z_0|^2)^{\alpha}dz\Big]\cdot H(r)\nonumber\\[2mm]
&-\frac{1}{(\alpha+1)r}I^2(r)
-C\Big(\|\nabla V\|_{L^{\infty}}+1\Big)H^2(r)-CI(r)H(r)\Big\}\nonumber\\[2mm]
&\geq\frac{1}{H^2(r)}\left\{-C\Big(\|\nabla V\|_{L^{\infty}}+1\Big)H^2(r)-CI(r)H(r)\right\}\nonumber\\[2mm]
&\geq -C\Big(\|\nabla V\|_{L^{\infty}}+1\Big)-CN(r),
\end{align*}
where we have used the Cauchy inequality in the following form
\begin{align*}
&\Big\{\frac{4(\alpha+1)}{r}\int_{B_r(z_0)}\Big(\nabla u\cdot(z-z_0)\Big)^2(r^2-|z-z_0|^2)^{\alpha}dz\\[2mm]
&+\frac{4(\alpha+1)}{r}\int_{B_r(z_0)}\Big(\nabla w\cdot(z-z_0)\Big)^2(r^2-|z-z_0|^2)^{\alpha}dz\Big\}\cdot H(r)
-\frac{1}{(\alpha+1)r}I^2(r)\geq 0,
\end{align*}
thanks to the definitions of $H(r), I(r)$ in (\ref{H}) and (\ref{I}).
This implies the  almost monotonicity property of $N(r)$, this completes the proof of the lemma.
\end{proof}

\begin{remark}
In the following proof, we always  take $\alpha=\|\nabla V\|_{L^{\infty}}$.
\end{remark}

Next, we are going to establish some doubling estimates of $H(r)$ as well as  the following $L^2$-integral without weight.
Let
\begin{equation*}
h(z_0,r)=\int_{B_r(z_0)}(u^2+w^2)dz.
\end{equation*}
It is easy to check that
\begin{equation}\label{h-H-1}
H(z_0,r)\leq r^{2\alpha}h(z_0,r),
\end{equation}
and
\begin{equation}\label{h-H-2}
h(z_0,r)\leq \frac{H(z_0,\rho)}{(\rho^2-r^2)^{\alpha}},\quad 0<r<\rho<1.
\end{equation}

With the help of the almost monotonicity of $N(r)$ in Lemma \ref{keylemma}, we are able to establish the following  doubling estimates.
\begin{lemma}\label{lem2}
For $0<r_1<r_2<1$, it holds
\begin{align}
H(z_0,r_2)\label{dou-1}
\leq\left(\frac{r_2}{r_1}\right)^{(2\alpha+n+1)+\frac{C\left(N(z_0,r_2)+\|\nabla V\|_{L^{\infty}}+1\right)}{\alpha+1}}H(z_0,r_1),
\end{align}
and
\begin{align}\label{dou-2}
H(z_0,r_2)
\geq\left(\frac{r_2}{r_1}\right)^{(2\alpha+n+1)+\frac{C^{-1}N(z_0,r_1)-\|\nabla V\|_{L^{\infty}}-1}{\alpha+1}}H(z_0,r_1).
\end{align}
Furthermore, for $0<r_1<r_2<2r_2<1$, it holds
\begin{align}\label{dou-3}
h(z_0,r_2)
\leq\left(\frac{4}{3}\right)^{\alpha}\left(\frac{2r_2}{r_1}\right)^{n+1+\frac{C(N(z_0, 2r_2)+\|\nabla V\|_{L^{\infty}}+1)}{\alpha+1}}h(z_0,r_1),
\end{align}
and
\begin{align}\label{dou-4}
h(z_0,r_2)\geq \left(\frac{3}{4}\right)^{\alpha}\left(\frac{r_2}{2r_1}\right)^{n+1+\frac{C^{-1}N(z_0,2r_1)-\|\nabla V\|_{L^{\infty}}-1}{\alpha+1}}h(z_0,r_1),
\end{align}
where constant $C$ depends only on $n$.
\end{lemma}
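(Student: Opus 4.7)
The strategy is to read \eqref{fin-1} as a logarithmic ODE for $H$: dividing by $H(r)$ gives
\begin{equation*}
(\log H)'(r)=\frac{2\alpha+n+1}{r}+\frac{N(z_0,r)}{(\alpha+1)\,r}.
\end{equation*}
On $[r_1,r_2]\subset(0,1)$ the almost monotonicity of Lemma \ref{keylemma} yields a two-sided bound for $N(z_0,r)$: since $e^{Cr}(N(z_0,r)+\|\nabla V\|_{L^{\infty}}+1)$ is nondecreasing and $r<1$, the exponential factor is comparable to $1$, so $N(z_0,r)\le C(N(z_0,r_2)+\|\nabla V\|_{L^{\infty}}+1)$ from above and $N(z_0,r)\ge C^{-1}N(z_0,r_1)-\|\nabla V\|_{L^{\infty}}-1$ from below. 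Integrating $(\log H)'$ over $[r_1,r_2]$ and feeding in these two bounds produces \eqref{dou-1} and \eqref{dou-2} after exponentiation, since the $r$-integral $\int_{r_1}^{r_2} s^{-1}\,ds$ just contributes the factor $\log(r_2/r_1)$ that matches the stated exponents.

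To pass from the weighted quantity $H$ to the unweighted $h$, I will use the sandwich \eqref{h-H-1}--\eqref{h-H-2}. For \eqref{dou-3}, apply \eqref{h-H-2} with $\rho=2r_2$ to get $h(z_0,r_2)\le (3r_2^2)^{-\alpha}H(z_0,2r_2)$, then chain with \eqref{dou-1} applied to the pair $(r_1,2r_2)$, and finally bound $H(z_0,r_1)\le r_1^{2\alpha}h(z_0,r_1)$ by \eqref{h-H-1}. The $\alpha$-exponents collapse via the algebraic identity
\begin{equation*}
\frac{r_1^{2\alpha}}{(3r_2^2)^{\alpha}}\cdot\left(\frac{2r_2}{r_1}\right)^{2\alpha}=\left(\frac{4}{3}\right)^{\alpha},
\end{equation*}
leaving exactly the growth exponent $n+1+C(N(z_0,2r_2)+\|\nabla V\|_{L^{\infty}}+1)/(\alpha+1)$, which is \eqref{dou-3}. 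The lower bound \eqref{dou-4} is symmetric: use $h(z_0,r_2)\ge r_2^{-2\alpha}H(z_0,r_2)$ from \eqref{h-H-1}, then \eqref{dou-2} for the pair $(2r_1,r_2)$, then $h(z_0,r_1)\le (3r_1^2)^{-\alpha}H(z_0,2r_1)$ from \eqref{h-H-2}, and the reciprocal version of the same algebraic identity produces the prefactor $(3/4)^{\alpha}$.

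The only point that needs care is checking that the constants produced by the almost monotonicity of $N$ have exactly the shape the statement advertises, namely a multiplicative factor on $N$ together with an additive $\|\nabla V\|_{L^{\infty}}+1$; this is automatic from Lemma \ref{keylemma} because $r<1$ controls the exponential $e^{Cr}$. Beyond this, the argument is pure bookkeeping combining one monotonicity input with two elementary weight comparisons, and I do not anticipate any substantial analytic obstacle.
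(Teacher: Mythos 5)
Your proposal is correct and follows essentially the same route as the paper: integrate the logarithmic identity \eqref{fin-1} for $H'/H$ over $[r_1,r_2]$, control $N(z_0,r)$ on that interval via the almost monotonicity of Lemma \ref{keylemma} (exactly the paper's intermediate bounds), and then pass between $h$ and $H$ using \eqref{h-H-1}--\eqref{h-H-2} with the same $(4/3)^{\alpha}$, $(3/4)^{\alpha}$ bookkeeping. No gaps beyond those already present in the paper's own argument.
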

\begin{proof}
Lemma \ref{keylemma} implies, for any $0<r_1<r<r_2<1$, that
\begin{align}\label{almost}
N(z_0,r)&\leq e^{C(r_2-r_1)}\Big(N(z_0,r_2)+\|\nabla V\|_{L^{\infty}}+1\Big)-\|\nabla V\|_{L^{\infty}}-1\nonumber\\[2mm]
&\leq C \Big(N(z_0,r_2)+\|\nabla V\|_{L^{\infty}}+1\Big),
\end{align}
and
\begin{align}\label{almost2}
N(z_0,r)&\geq e^{-C(r_2-r_1)}\Big(N(z_0,r_1)+\|\nabla V\|_{L^{\infty}}+1\Big)-\|\nabla V\|_{L^{\infty}}-1\nonumber\\[2mm]
&\geq C^{-1}N(z_0,r_1)-\|\nabla V\|_{L^{\infty}}-1.
\end{align}
From the equality $(\ref{fin-1})$, one has
\begin{equation}\label{H-2}
\frac{H'(z_0,r)}{H(z_0,r)}=\frac{2\alpha+n+1}{r}+\frac{N(z_0,r)}{(\alpha+1)r}.
\end{equation}
Then, integrating form $r_1$ to $r_2$ on (\ref{H-2}), and using  (\ref{almost}), we get
\begin{align*}
\log\frac{H(z_0,r_2)}{H(z_0,r_1)}&=\int_{r_1}^{r_2}\left(\frac{2\alpha+n+1}{r}+\frac{N(z_0,r)}{(\alpha+1)r}\right)dr\\[2mm]
&\leq\left\{(2\alpha+n+1)+\frac{C}{\alpha+1}\Big(N(z_0,r_2)+\|\nabla V\|_{L^{\infty}}+1\Big)\right\}\log \frac{r_2}{r_1},
\end{align*}
which yields
\begin{align*}
\frac{H(z_0,r_2)}{H(z_0,r_1)}
&\leq\left(\frac{r_2}{r_1}\right)^{(2\alpha+n+1)+\frac{C(N(z_0,r_2)+\|\nabla V\|_{L^{\infty}}+1)}{\alpha+1}}.
\end{align*}
On the other hand,  (\ref{almost2}) yields
\begin{align*}
\log\frac{H(z_0,r_2)}{H(z_0,r_1)}
\geq\left\{(2\alpha+n+1)+\frac{1}{\alpha+1}\Big(C^{-1}N(z_0,r_1)-\|\nabla V\|_{L^{\infty}}-1\Big)\right\}\log \frac{r_2}{r_1},
\end{align*}
which means
\begin{align*}
\frac{H(z_0,r_2)}{H(z_0,r_1)}
&\geq\left(\frac{r_2}{r_1}\right)^{(2\alpha+n+1)+\frac{C^{-1}N(z_0,r_1)-\|\nabla V\|_{L^{\infty}}-1}{(\alpha+1)}}.
\end{align*}
That is,  (\ref{dou-1}) and (\ref{dou-2}) are proved.

Moreover, according to the relationships with $h(r)$ and $H(r)$, it holds
\begin{align*}
\frac{h(z_0,r_2)}{h(z_0,r_1)}&\leq\frac{(3r_2^2)^{-\alpha}H(z_0,2r_2)}{r_1^{-2\alpha}H(z_0,r_1)}\nonumber\\[2mm]
&\leq\left(\frac{4}{3}\right)^{\alpha}\left(\frac{2r_2}{r_1}\right)^{n+1+\frac{C(N(z_0, 2r_2)+\|\nabla V\|_{L^{\infty}}+1)}{\alpha+1}},
\end{align*}
and
\begin{align*}
\frac{h(z_0,r_2)}{h(z_0,r_1)}&\geq \frac{r_2^{-2\alpha}H(z_0,r_2)}{(3r_1^2)^{-\alpha}H(z_0,2r_1)}\nonumber\\[2mm]
&\geq \left(\frac{3}{4}\right)^{\alpha}\left(\frac{r_2}{2r_1}\right)^{n+1+\frac{C^{-1}N(z_0,2r_1)-\|\nabla V\|_{L^{\infty}}-1}{\alpha+1}}.
\end{align*}
This finishes the proof.
\end{proof}

At the end of this section,
we are going to derive the following ``changing center'' property of the frequency function.
\begin{lemma}\label{lem-change}
For any $z_1\in B_{r/32}(z_0)$, it holds
\begin{align}\label{chang}
N\left(z_1,r/8\right)\leq  C\left\{\|\nabla V\|_{L^{\infty}}+(\alpha+1)^2+N(z_0,9r/16)\right\},
\end{align}
where $C$ is a positive constant depending only on $n$. In particular, for any $z\in B_{1/32}(0)$ and any $\rho<\frac{1}{8}$,
\begin{align}
N(z,\rho)\leq C\left\{\|\nabla V\|_{L^{\infty}}+(\alpha+1)^2+N(0,1)\right\},
\end{align}
where $C$ is a positive constant depending only on $n$.
\end{lemma}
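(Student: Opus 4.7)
The plan is to control $N(z_1,r/8)$ by the quotient $H(z_1,r/4)/H(z_1,r/8)$, exploit the ball inclusions $B_{r/4}(z_1)\subset B_{9r/32}(z_0)$ and $B_{r/32}(z_0)\subset B_{r/16}(z_1)$ (both immediate from $|z_1-z_0|\leq r/32$) to pass from $z_1$-centered $h$-quantities to $z_0$-centered ones, and then invoke the $h$-doubling inequality (\ref{dou-3}) at center $z_0$, whose exponent is controlled by $N(z_0,9r/16)$.

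First, I apply the lower doubling estimate (\ref{dou-2}) at $z_1$ with radii $r/8$ and $r/4$, take logarithms, and solve for $N(z_1,r/8)$. Dropping the nonnegative geometric contribution $(\alpha+1)(2\alpha+n+1)$ yields
\begin{equation*}
N(z_1,r/8)\leq C(\alpha+1)\log\frac{H(z_1,r/4)}{H(z_1,r/8)}+C\bigl(\|\nabla V\|_{L^{\infty}}+1\bigr).
\end{equation*}
Next, combining (\ref{h-H-1}) in the form $H(z_1,r/4)\leq(r/4)^{2\alpha}h(z_1,r/4)$ with (\ref{h-H-2}) applied with $r\mapsto r/16$ and $\rho\mapsto r/8$, which yields $H(z_1,r/8)\geq(3r^2/256)^{\alpha}h(z_1,r/16)$, and then using the two ball inclusions above, I obtain
\begin{equation*}
\frac{H(z_1,r/4)}{H(z_1,r/8)}\leq\Bigl(\tfrac{16}{3}\Bigr)^{\alpha}\frac{h(z_1,r/4)}{h(z_1,r/16)}\leq\Bigl(\tfrac{16}{3}\Bigr)^{\alpha}\frac{h(z_0,9r/32)}{h(z_0,r/32)}.
\end{equation*}

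Applying the $h$-doubling inequality (\ref{dou-3}) at $z_0$ with $r_1=r/32$ and $r_2=9r/32$, so that $2r_2/r_1=18$ and $2r_2=9r/16$, and taking logarithms gives
\begin{equation*}
\log\frac{H(z_1,r/4)}{H(z_1,r/8)}\leq C\alpha+C+\frac{C\bigl(N(z_0,9r/16)+\|\nabla V\|_{L^{\infty}}+1\bigr)}{\alpha+1}.
\end{equation*}
Substituting back into the first display, the factor $(\alpha+1)$ in front of the logarithm cancels the $(\alpha+1)^{-1}$ in front of the $N$-term, while the geometric contribution $C\alpha\cdot(\alpha+1)$ becomes $O((\alpha+1)^2)$. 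This produces the main estimate (\ref{chang}).

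For the particular case $z\in B_{1/32}(0)$ and $\rho<1/8$, I take $z_0=0$, $z_1=z$, $r=1$ to bound $N(z,1/8)$ by $C\{\|\nabla V\|_{L^{\infty}}+(\alpha+1)^2+N(0,9/16)\}$, and then use the almost monotonicity of $e^{Cr}\bigl(N(z_0,r)+\|\nabla V\|_{L^{\infty}}+1\bigr)$ from Lemma \ref{keylemma} twice: to push $N(z,\rho)$ up to $N(z,1/8)$ for $\rho\leq 1/8$, and to push $N(0,9/16)$ up to $N(0,1)$, absorbing the $\|\nabla V\|_{L^{\infty}}+1$ terms into the constant. The main technical point, and the principal obstacle, is the precise bookkeeping in the combination step: the $(\alpha+1)$ in front of the logarithm must exactly balance the $(\alpha+1)^{-1}$ inside the doubling exponent to keep the $N(z_0,9r/16)$-term linear, while the unavoidable $O(\alpha)$ contribution from the geometric $(16/3)^{\alpha}$- and $(4/3)^{\alpha}$-factors forces the quadratic $(\alpha+1)^2$ term on the right-hand side of (\ref{chang}).
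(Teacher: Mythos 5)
Your argument is correct and follows essentially the same route as the paper: you bound $N(z_1,r/8)$ by a doubling quotient at $z_1$, transfer it to $z_0$-centered quantities via the inclusions $B_{r/4}(z_1)\subset B_{9r/32}(z_0)$ and $B_{r/32}(z_0)\subset B_{r/16}(z_1)$, and close with (\ref{dou-3}) at $z_0$, with the same $(\alpha+1)$ bookkeeping that produces the $(\alpha+1)^2$ term. The only cosmetic difference is that you re-derive the content of (\ref{dou-4}) from (\ref{dou-2}) together with (\ref{h-H-1})--(\ref{h-H-2}) instead of quoting it, and you spell out the monotonicity step for the second claim, which the paper leaves terse.
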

\begin{proof}
By  (\ref{dou-4}), we have
\begin{align}\label{dou-5}
N\left(z_1,2r_1\right)&\leq C\left\{\|\nabla V\|_{L^{\infty}}+1+(\alpha+1)\left(\log\frac{r_2}{2r_1}\right)^{-1}
\log\left(\left(\frac{4}{3}\right)^{\alpha}\cdot\frac{h(z_1,r_2)}{h(z_1,r_1)}\right)\right\}.
\end{align}
Taking $r_1=r/16$, $r_2=r/4$ in (\ref{dou-5}), and noting that $B(z_1,r/4)\subseteq B(z_0,9r/32)$, and $B(z_0,r/32)\subseteq B(z_1,r/16)$  for $z_1\in B(z_0,r/32)$,  we obtain
\begin{align}\label{chang1}
N\left(z_1,r/8\right)&\leq C\left\{\|\nabla V\|_{L^{\infty}}+1+(\alpha+1)\log \left(\frac{4}{3}\right)^{\alpha}+(\alpha+1)
\log\frac{h(z_1,r/4)}{h(z_1,r/16)}\right\}\nonumber\\[2mm]
&\leq  C\left\{\|\nabla V\|_{L^{\infty}}+1+(\alpha+1)\log \left(\frac{4}{3}\right)^{\alpha}+(\alpha+1)
\log\frac{h(z_0,9r/32)}{h(z_0,r/32)}\right\}.
\end{align}
On the other hand,  (\ref{dou-3}) implies
\begin{align}\label{chang2}
\log\frac{h(z_0,9r/32)}{h(z_0,r/32)}&\leq \log \left(\frac{4}{3}\right)^{\alpha}+ \left(n+1+\frac{C(N(z_0,9r/16)+\|\nabla V\|_{L^{\infty}}+1)}{\alpha+1}\right)\log\frac{9}{2}\nonumber\\[2mm]
&\leq C\left(\alpha+1+\frac{N(z_0,9r/16)}{\alpha+1}\right).
\end{align}
Putting (\ref{chang2}) into (\ref{chang1})  implies the desired result (\ref{chang}). Moreover,  by the almost  monotonicity property of $N(r)$ in (\ref{almost}), we obtain (\ref{chang2}). 
\end{proof}
\section{The maximal vanishing order}

In this section, we prove Theorem \ref{thm}. In order to  establish the quantitative relationship between the vanishing order
and the frequency,   we need to  some interior estimates for solutions of (\ref{equ}).

We first prove a  Caccioppoli type inequality  for solutions to (\ref{equ}). In such an inequality, we will focus on how  the coefficient depends on the norms of the potential $V$. Precisely,
\begin{lemma}\label{lem4}
Let $u$ and $w$ be the solutions of
(\ref{equ}), then there exists a positive constant $C$ depending only on $n$ such that
\begin{equation}\label{cap}
\int_{B_{r}(z_0)}w^2dz\leq C(\lambda^2+1) r^{-4}\int_{B_{2r}(z_0)}u^2dz,
\end{equation}for any $B_{2r}(z_0)\subset B_1(0,0).$
\end{lemma}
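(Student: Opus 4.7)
The bound we seek is a Caccioppoli-type inequality for the system~(\ref{equ}); the key feature is that the constant should scale like $\lambda^2+1$, not a worse power of $\lambda$. The cleanest route is to reduce $w$ to $\Delta u$ and then run a standard Caccioppoli argument for the fourth-order equation~(\ref{equ2}).

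My plan is as follows. First, the first equation of~(\ref{equ}) gives the pointwise identity $w = \Delta u - (\lambda/2)u$, so
\begin{equation*}
w^2 \;\leq\; 2(\Delta u)^2 + \tfrac{\lambda^2}{2}\, u^2.
\end{equation*}
Since $r<1$, the term $(\lambda^2/2)\int u^2$ is already dominated by the target right-hand side $C(\lambda^2+1)r^{-4}\int u^2$. It therefore suffices to prove the Caccioppoli-type estimate $\int_{B_r(z_0)}(\Delta u)^2\, dz \le C(\lambda^2+1) r^{-4}\int_{B_{2r}(z_0)} u^2\, dz$ for the bi-harmonic equation~(\ref{equ2}).

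To prove this, I would choose nested cutoffs $\zeta_1 \le \zeta_2$ in $C_c^\infty(B_{2r}(z_0))$ with $\zeta_1\equiv 1$ on $B_r(z_0)$, supp $\zeta_1 \subset B_{3r/2}(z_0)$, $\zeta_2\equiv 1$ on supp $\zeta_1$, supp $\zeta_2\subset B_{2r}(z_0)$, and with $|\nabla^k \zeta_i|\leq C r^{-k}$. Multiplying equation~(\ref{equ2}) by $\zeta_1^4 u$ and integrating by parts twice via
\begin{equation*}
\int \zeta_1^4 u\, \Delta^2 u\, dz \;=\; \int \Delta(\zeta_1^4 u)\,\Delta u\, dz \;=\; \int \zeta_1^4(\Delta u)^2\, dz + 2\!\int \nabla\zeta_1^4\!\cdot\!\nabla u\,\Delta u\, dz + \int (\Delta \zeta_1^4)\, u\,\Delta u\, dz,
\end{equation*}
and rewriting $\int \zeta_1^4 u\Delta u = -\!\int \zeta_1^4 |\nabla u|^2 - \!\int u\nabla\zeta_1^4\!\cdot\!\nabla u$, one obtains after applying Young's inequality to the cross terms (and using $\|V\|_\infty = \lambda^2/4$ from~(\ref{V}) to bound $V-\lambda^2$ by $C\lambda^2$):
\begin{equation*}
\int \zeta_1^4 (\Delta u)^2\, dz + 2\lambda\!\int \zeta_1^4 |\nabla u|^2 dz \;\leq\; C(\lambda^2+1) r^{-4}\!\int_{B_{2r}} u^2 dz \;+\; C r^{-2}\!\int \zeta_2^2 |\nabla u|^2 dz.
\end{equation*}

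Next, I would bound the gradient term. Testing the \emph{first} equation of~(\ref{equ}), $\Delta u - (\lambda/2)u = w$, against $\zeta_2^2 u$ and integrating by parts yields
\begin{equation*}
\int \zeta_2^2 |\nabla u|^2 dz \;=\; -\!\int \zeta_2^2 uw\, dz - \tfrac{\lambda}{2}\!\int \zeta_2^2 u^2 dz - 2\!\int \zeta_2 u\nabla\zeta_2\!\cdot\!\nabla u\, dz,
\end{equation*}
and absorbing the gradient cross term together with Young's inequality applied to $\int \zeta_2^2 uw$ with weight $\varepsilon = c r^2$ gives
\begin{equation*}
\int \zeta_2^2 |\nabla u|^2 dz \;\leq\; C r^{2}\!\int \zeta_2^2 w^2 dz + C\bigl(\lambda + r^{-2}\bigr)\!\int_{B_{2r}} u^2 dz.
\end{equation*}
Substituting this back and invoking the pointwise identity $w^2 \le 2(\Delta u)^2 + \tfrac{\lambda^2}{2} u^2$ converts the residual $\int \zeta_2^2 w^2$ into a multiple of $\int \zeta_2^2(\Delta u)^2$ plus lower order, which can be absorbed into the left-hand side by taking the Young constants small, closing the estimate and producing the desired bound.

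The main obstacle, I expect, is the coupling between $(\Delta u)^2$ and $w^2$: the naive Caccioppoli argument produces a circular dependence, and one must carefully choose the nested cutoffs together with the weights in Young's inequality so that all cross terms can be absorbed while the final constant tracks as $\lambda^2+1$ rather than a higher power of $\lambda$. This is exactly the reason for isolating the worst coefficient $\|V\|_\infty \sim \lambda^2$ in~(\ref{V}) and for multiplying by a $u$-weighted test function (which places the $\lambda^2$ coefficient on $u^2$, not on $(\Delta u)^2$).
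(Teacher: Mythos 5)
Your overall strategy coincides with the paper's: reduce $w$ to $\Delta u$ via $w=\Delta u-\tfrac{\lambda}{2}u$ (so $w^2\le 2(\Delta u)^2+\tfrac{\lambda^2}{2}u^2$), test the fourth-order equation (\ref{equ2}) with $u$ times a fourth power of a cutoff, and control the resulting gradient term by a second integration by parts of $\int \Delta u\, u\,(\text{cutoff})^2$. The treatment of the potential term ($|V-\lambda^2|\le C\lambda^2$ instead of the paper's observation that $V-\lambda^2<0$) is a harmless variation.

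However, as written your absorption step does not close, and this is a genuine gap rather than a technicality you can wave away by ``taking the Young constants small.'' Your gradient estimate is carried out with the larger cutoff $\zeta_2$ and reintroduces $w$: it produces a term $C\,r^{-2}\cdot r^{2}\int \zeta_2^2 w^2 \sim C\int\zeta_2^2(\Delta u)^2$ supported on $\operatorname{supp}\zeta_2$, while the left-hand side only controls $\int\zeta_1^4(\Delta u)^2$ on the strictly smaller set $\operatorname{supp}\zeta_1$. Since $\zeta_2\ge\zeta_1$, no choice of small Young weights can absorb $\int\zeta_2^2(\Delta u)^2$ into $\int\zeta_1^4(\Delta u)^2$: the mass of $(\Delta u)^2$ in the annulus $\operatorname{supp}\zeta_2\setminus\operatorname{supp}\zeta_1$ is simply not controlled. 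To repair this you must either (i) invoke a standard iteration-over-radii lemma of the type $f(t)\le \theta f(s)+A(s-t)^{-4}+B$ with $\theta<1$ (the paper quotes exactly such a lemma from Han--Lin for a different purpose), or (ii) do what the paper's proof does: avoid $w$ and the second cutoff altogether, and bound the gradient term by testing with $u\eta^2$ for the \emph{same} cutoff $\eta$, so that Young's inequality yields
\begin{equation*}
\int \eta^2|\nabla u|^2\,dz \;\le\; \varepsilon\, r^{2}\int (\Delta u)^2\eta^4\,dz \;+\; C(\varepsilon)\,r^{-2}\int_{B_{2r}(z_0)} u^2\,dz,
\end{equation*}
with the $(\Delta u)^2$ carrying the weight $\eta^4$ on the same support; choosing $\varepsilon$ small depending only on the dimensional cutoff constants then allows direct absorption and gives $\int(\Delta u)^2\eta^4\le C(\lambda^2+1)r^{-4}\int_{B_{2r}}u^2$, from which (\ref{cap}) follows as in your first reduction. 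With fix (ii) your argument becomes essentially identical to the paper's proof.
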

\begin{proof}
Suppose that $u$ and $w$ are  solutions of (\ref{equ}), then $u$ is a solution of (\ref{equ2}), that is,
\begin{align}\label{test}
\int_{B_{2r}(z_0)}\Delta u\Delta\varphi dz=\int_{B_{2r}(z_0)}\Big(2\lambda \Delta u+ (V-\lambda^2)u\Big)\varphi dz
\end{align}
for any $\varphi\in W^{2,2}_0(B_{2r}(z_0))$.
Let $\eta\in C_0^{\infty}$ be a cut-off function:
\begin{align}\label{cut}
0\leq\eta\leq1;\quad \eta\equiv1 \ \mbox{for} \ |z-z_0|\leq r,\quad \eta=0 \ \mbox{for}\  |z-z_0|\geq 2r;
\quad |\nabla \eta|\leq \frac{C}{r}, \quad |\nabla^2 \eta|\leq \frac{C}{r^2}.
\end{align}
Taking test function $\varphi=u\eta^4$ in  (\ref{test}), we
deduce that
\begin{align*}
\int_{B_{2r}(z_0)}\Delta u \Delta (u\eta^4)dz=\int_{B_{2r}(z_0)} 2\lambda \Delta u u\eta^4 dz+\int_{B_{2r}(z_0)}(V-\lambda^2)u^2\eta^4dz,
\end{align*}
which implies
\begin{align*}
\int_{B_{2r}(z_0)}(\Delta u)^2\eta^4dz&=\int_{B_{2r}(z_0)} 2\lambda \Delta u u\eta^4 dz +\int_{B_{2r}(z_0)}(V-\lambda^2)u^2\eta^4dz
-4\int_{B_{2r}(z_0)}\eta^3u\Delta u\Delta\eta dz\\[2mm]
&-12\int_{B_{2r}(z_0)}\eta^2u\Delta u|\nabla\eta|^2dz
-8\int_{B_{2r}(z_0)}\eta^3\Delta u\nabla u\cdot \nabla\eta dz.
\end{align*}
By  using the Young  inequality and noting $V-\lambda^2<0$,
\begin{align}\label{ineq}
\int_{B_{2r}(z_0)}(\Delta u)^2\eta^4dz&\leq \frac{1}{4}\int_{B_{2r}(z_0)}(\Delta u)^2\eta^4dz+C\left\{\int_{B_{2r}(z_0)}\lambda^2 u^2\eta^4dz
+\int_{B_{2r}(z_0)}\eta^2u^2(\Delta\eta)^2dz\right\}\nonumber\\[2mm]
&+C\left\{\int_{B_{2r}(z_0)}u^2|\nabla\eta|^4dz
+\int_{B_{2r}(z_0)}\eta^2|\nabla u|^2|\nabla\eta|^2dz\right\}.
\end{align}
On the other hand,  integrating by parts, one has
\begin{align*}
\int_{B_{2r}(z_0)}\Delta u (u\eta^2)dz=-\int_{B_{2r}(z_0)} |\nabla u|^2\eta^2dz-2\int_{B_{2r}(z_0)}u\eta \nabla u\cdot\nabla\eta dz,
\end{align*}
Then, by using the  Young inequality and (\ref{cut}), we get
\begin{eqnarray*}
\int_{B_{2r}(z_0)}|\nabla u|^2\eta^2dz\leq \frac{1}{4}  r^2\int_{B_{2r}(z_0)}(\Delta u)^2\eta^4dz+Cr^{-2}\int_{B_{2r}(z_0)}u^2dz.
\end{eqnarray*}
Therefore, using (\ref{cut}) again, we  have
\begin{align}\label{w4}
\int_{B_{2r}(z_0)}\eta^2|\nabla u|^2|\nabla \eta|^2dz
\leq \frac{1}{4} \int_{B_{2r}(z_0)}(\Delta u)^2\eta^4+Cr^{-4}\int_{B_{2r}(z_0)}u^2dz.
\end{align}
Plugging (\ref{w4}) into (\ref{ineq}) and using (\ref{cut}), we obtain
\begin{align*}
\int_{B_{2r}(z_0)}(\Delta u)^2\eta^4dz\leq C r^{-4}\int_{B_{2r}(z_0)}(\lambda^2+1)u^2dz,
\end{align*}
which yields the desired inequality (\ref{cap}) thanks to the equation (\ref{equ}).
\end{proof}

Next, we shall  to  establish some $L^{\infty}$-$L^2$ estimate for the solutions to (\ref{equ}) by using  the classical De Giorgi-Nash-Moser theory of the second-order case.
\begin{lemma}\label{lem3}
Let $u$ be the solution of (\ref{equ}). There exists a positive constant $C$ depending only on $n$
such that
\begin{align}
\|u\|_{L^{\infty}(B_{r}(z_0))}\leq C\left\{ \lambda^2+1+\|\nabla V\|^{\frac{n+2}{4}}_{L^{\infty}} \right\}r^{-\frac{n+1}{2}} \|u\|_{L^{2}(B_{2r}(z_0))},
\end{align}
for any $B_{2r}(z_0)\subset B_1(0,0)$.
\end{lemma}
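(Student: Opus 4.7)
The plan is to apply the classical De Giorgi--Nash--Moser $L^\infty$--$L^2$ estimate separately to each of the two second-order equations in the system \eqref{equ}, using Lemma \ref{lem4} as the $L^2$ coupling between $w$ and $u$. I first rewrite \eqref{equ} as
\[
-\Delta u + \tfrac{1}{2}\lambda u = -w, \qquad -\Delta w + \tfrac{3}{2}\lambda w = -(V-\tfrac{1}{4}\lambda^2)u.
\]
Crucially, both zeroth-order coefficients $\tfrac{1}{2}\lambda$ and $\tfrac{3}{2}\lambda$ are nonnegative (since $\lambda = 2\|V\|_{L^\infty}^{1/2} > 0$), so the standard Moser iteration applies with constants depending only on $n$, independent of $\lambda$. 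Moreover, by \eqref{V}, the source in the $w$-equation satisfies $|(V-\tfrac{1}{4}\lambda^2)u| \leq \|\nabla V\|_{L^\infty}|u|$, so $\|\nabla V\|_{L^\infty}$ is the only potential-related parameter entering through the $w$-source.

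For nested radii $r \leq \rho_1 < \rho_2 < \rho_3 \leq 2r$ with $\rho_2-\rho_1 = \rho_3-\rho_2 = \sigma/2$, the Moser estimate applied to the $u$-equation (with $L^\infty$ source) gives
\[
\|u\|_{L^\infty(B_{\rho_1})} \leq C\sigma^{-(n+1)/2}\|u\|_{L^2(B_{\rho_2})} + C\sigma^{2}\|w\|_{L^\infty(B_{\rho_2})},
\]
and applied to the $w$-equation gives
\[
\|w\|_{L^\infty(B_{\rho_2})} \leq C\sigma^{-(n+1)/2}\|w\|_{L^2(B_{\rho_3})} + C\sigma^{2}\|\nabla V\|_{L^\infty}\|u\|_{L^\infty(B_{\rho_3})}.
\]
Substituting the second bound into the first, and using Lemma \ref{lem4} at scale $\sigma/2$ to estimate $\|w\|_{L^2(B_{\rho_3})} \leq C(\lambda^2+1)^{1/2}\sigma^{-2}\|u\|_{L^2(B_{2r})}$, produces the recursion
\[
\Phi(\rho_1) \leq C\sigma^{-(n+1)/2}(\lambda^2+1)^{1/2}\|u\|_{L^2(B_{2r})} + C\sigma^4\|\nabla V\|_{L^\infty}\Phi(\rho_3),
\]
where $\Phi(R) := \|u\|_{L^\infty(B_R(z_0))}$.

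Finally, I would apply a standard Giaquinta-type absorption iteration. Choosing $\sigma = c\min\{r, \|\nabla V\|_{L^\infty}^{-1/4}\}$ so that $C\sigma^4\|\nabla V\|_{L^\infty} \leq \tfrac{1}{2}$, iterating absorbs $\Phi(\rho_3)$ into the left side and leaves the leading term $C\sigma^{-(n+1)/2}(\lambda^2+1)^{1/2}\|u\|_{L^2(B_{2r})}$. Substituting the two cases for $\sigma$ and applying Young's inequality to decouple the product $(\lambda^2+1)^{1/2}\|\nabla V\|_{L^\infty}^{(n+1)/8}$ then produces the desired bound $C\{\lambda^2+1+\|\nabla V\|_{L^\infty}^{(n+2)/4}\}r^{-(n+1)/2}\|u\|_{L^2(B_{2r})}$. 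The hardest part is this balancing of the step size $\sigma$: small enough for the absorbing factor to be below one, yet chosen so that the associated scaling $\sigma^{-(n+1)/2}$ matches precisely the prescribed exponent $(n+2)/4$ on $\|\nabla V\|_{L^\infty}$, while the finite-many iteration steps must still cover the annulus $B_{2r}\setminus B_r$.
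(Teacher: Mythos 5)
Your proposal is correct in substance and lands on the stated bound, but it produces the power of $\|\nabla V\|_{L^{\infty}}$ by a genuinely different mechanism than the paper. The paper, like you, applies De Giorgi--Nash--Moser to each equation of \eqref{equ} (exploiting that the zeroth--order coefficients $\tfrac12\lambda$, $\tfrac32\lambda$ are nonnegative, so the constants are $\lambda$-independent) and couples $w$ back to $u$ through the Caccioppoli inequality of Lemma \ref{lem4}; but it measures the sources in $L^{q}$, $L^{s}$ norms with $s=\tfrac{n+2}{2}$, interpolates $\|u\|_{L^{(n+2)/2}}\le \|u\|_{L^{\infty}}^{(n-2)/(n+2)}\|u\|_{L^{2}}^{4/(n+2)}$, and uses Young's inequality to split off an $\varepsilon\|u\|_{L^{\infty}}$ term, absorbed via the Han--Lin iteration lemma; that is exactly where the exponent $\tfrac{n+2}{4}$ arises. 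You instead keep $L^{\infty}$ bounds on the sources and manufacture smallness by tuning the step width $\sigma\sim\min\{r,\|\nabla V\|_{L^{\infty}}^{-1/4}\}$, which gives $\sigma^{-(n+1)/2}\lesssim r^{-(n+1)/2}\bigl(1+\|\nabla V\|_{L^{\infty}}^{(n+1)/8}\bigr)$ and, after Young against $(\lambda^{2}+1)^{1/2}$, the power $\tfrac{n+1}{4}\le\tfrac{n+2}{4}$, so your route is viable and even marginally sharper in the $\|\nabla V\|$ exponent. Two points need care to make it airtight. First, the absorption: with a \emph{fixed} step $\sigma$ and finitely many steps across the annulus, the iteration leaves a term $\theta^{N}\|u\|_{L^{\infty}(B_{2r})}$ that is not controlled by the target right-hand side; you must instead verify the hypothesis of the Giaquinta/Han--Lin lemma, i.e. an inequality of the form $\Phi(t)\le \tfrac12\Phi(s)+A''(s-t)^{-(n+1)/2}$ for \emph{all} $r\le t<s\le 2r$ (for gaps exceeding $\sigma_{0}=c\min\{r,\|\nabla V\|_{L^{\infty}}^{-1/4}\}$ replace $s$ by $t+\sigma_{0}$ and pay the harmless factor $(r/\sigma_{0})^{(n+1)/2}\lesssim 1+\|\nabla V\|_{L^{\infty}}^{(n+1)/8}$), so that the geometrically shrinking-gap argument, together with the qualitative local boundedness of $u$, kills the tail --- this is precisely the lemma the paper quotes. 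Second, applying Lemma \ref{lem4} on the off-center region $B_{\rho_{3}}(z_{0})$ requires covering it by small balls whose doubles stay inside $B_{2r}(z_{0})$, so you should take $\rho_{3}\le 2r-\sigma$ rather than $\rho_{3}\le 2r$. Both are routine repairs and do not affect the final estimate.
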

\begin{proof}
Since $u$ is a solution of the first equation in (\ref{equ}), that is,
\begin{align}\label{weak1}
-\int_{B_1}\nabla u\cdot\nabla \phi dz-\frac{1}{2}\int_{B_1}\lambda u\phi dz=\int_{B_1}w\phi dz, \quad \mbox{for any} \quad  \phi\in H^{1}_0(B_1).
\end{align}
Let $\bar{u}=(u-k)^{+}$ for $k\geq0$ and $\xi\in C_0^1(B_1)$. Set $\phi=\bar{u}\xi^2$ as the test function in (\ref{weak1}) and  note $\lambda>0$. Then, by using the H\"{o}lder inequality we have
\begin{align}
\int_{\{u>k\}}|\nabla (\bar{u}\xi)|^2\leq C\int_{\{u>k\}}\bar{u}^2|\nabla\xi|^2+\int_{\{u>k\}}|w|\bar{u}\xi^2.
\end{align}
It is worth noting that  $C$ is a  constant independent of $\lambda$.
Then,  according  to the  De Giorgi's  iterative process (see e.g. Theorem 4.1 in \cite{book}), we can arrive at the following estimate
\begin{align}\label{est-u}
\|u\|_{L^{\infty}(B_{1/8})}\leq C\left\{\|u\|_{L^2(B_{1/4})}+\|w\|_{L^{q}(B_{1/4})}\right\},  \quad \mbox{for}\quad  q>\frac{n+1}{2},
\end{align}
where $C=C(n,q)$ is a positive constant depending only on $n$ and $q$.

In a similar way,  we write the second equation in (\ref{equ}) as an inhomogeneous equation
 \begin{align*}
 \Delta w-\frac{3}{2}\lambda w=f\equiv (V-\frac{1}{4}\lambda^2)u,
 \end{align*}
where $\lambda>0$.
Similarly,  applying the  De Giorgi's  iterative argument, for $s>\frac{n+1}{2}$ there holds
\begin{align}\label{est-w}
\|w\|_{L^{q}(B_{1/4})}
\leq C(q,n)\|w\|_{L^{\infty}(B_{1/4})}
\leq C(n,q,s)\left\{\|w\|_{L^2(B_{1/2})}+\left\|\left(V-\frac{1}{4}\lambda^2\right)u\right\|_{L^{s}(B_{1/2})}\right\}.
\end{align}
Taking $s=\frac{n+2}{2}$, so $s\geq 2$ thanks to $n\geq 2$, using Young's inequality, the last term on above can be estimated as
\begin{align}\label{s}
\left\|(V-\frac{1}{4}\lambda^2)u\right\|_{L^{s}(B_{1/2})}&\leq \left\|V-\frac{1}{4}\lambda^2\right\|_{L^{\infty}}\left\|u\right\|_{L^{\frac{n+2}{2}}(B_{1/2})}\nonumber\\[2mm]
&\leq \|\nabla V\|_{L^{\infty}}\|u\|^{\frac{n-2}{n+2}}_{L^{\infty}(B_{1/2})}\|u\|^{\frac{4}{n+2}}_{L^{2}(B_{1/2})}\nonumber\\[2mm]
&\leq \varepsilon \|u\|_{L^{\infty}(B_{1/2})}+C(\varepsilon)\|\nabla V\|^{\frac{n+2}{4}}_{L^{\infty}}\|u\|_{L^{2}(B_{1/2})},
\end{align}
for any small $\varepsilon>0$. Putting  (\ref{est-w}) and (\ref{s}) into (\ref{est-u}), and using the  Caccioppoli type inequality (\ref{cap}), we infer that
\begin{align}
\|u\|_{L^{\infty}(B_{1/8})}\leq  \frac{1}{2} \|u\|_{L^{\infty}(B_{1/2})}+C\Big(\|\nabla V\|^{\frac{n
+2}{4}}_{L^{\infty}}+\lambda^2+1\Big)\|u\|_{L^{2}(B_1)}.
\end{align}
We apply the following lemma  and a stand  scaling argument to get
\begin{align*}
\|u\|_{L^{\infty}(B_{r})}\leq  C\left\{\|\nabla V\|^{\frac{n+2}{4}}_{L^{\infty}}+\lambda^2+1\right\}r^{-\frac{n+1}{2}}\|u\|_{L^{2}(B_{2r})}.
\end{align*}
This completes the proof.
\end{proof}

Over the proof of Lemma \ref{lem3}, we need the following lemma ( see e.g. Lemma 4.3 in \cite{book}).
\begin{lemma}
Let $f(t)\geq0$ be bounded in $[\tau_0,\tau_1]$ with $\tau_0\geq 0$. Suppose for
$\tau_0\leq t<s\leq \tau_1$ we have
\begin{align*}
f(t)\leq \theta f(s)+\frac{A}{(s-t)^{\alpha}}+B,
\end{align*}
for some $\theta\in [0,1)$. Then for any $\tau_0\leq t<s\leq \tau_1$ there holds
\begin{align*}
f(t)\leq C(\alpha,\theta)\left\{\frac{A}{(s-t)^{\alpha}}+B\right\}.
\end{align*}
\end{lemma}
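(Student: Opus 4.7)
The plan is to prove this by the standard hole-filling iteration argument used in Giaquinta's book. The key idea is to set up a geometric sequence $\{t_k\}$ climbing from $t$ up to $s$, apply the hypothesis at each step, and choose the ratio of the sequence so that the coefficient in front of $f$ decays fast enough to beat the blow-up of $(s-t)^{-\alpha}$.

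Concretely, I would fix $\tau \in (0,1)$ with $\theta \tau^{-\alpha} < 1$ (which is possible since $\theta < 1$, e.g.\ any $\tau$ with $\theta^{1/\alpha} < \tau < 1$ works) and define
\begin{equation*}
t_0 = t, \qquad t_{k+1} = t_k + (1-\tau)\tau^{k}(s-t),
\end{equation*}
so that $t_k \nearrow s$ and $t_{k+1} - t_k = (1-\tau)\tau^{k}(s-t)$. Applying the hypothesis on each pair $(t_k, t_{k+1}) \subset [\tau_0, \tau_1]$ gives
\begin{equation*}
f(t_k) \leq \theta f(t_{k+1}) + \frac{A}{(1-\tau)^{\alpha}\tau^{k\alpha}(s-t)^{\alpha}} + B.
\end{equation*}
Iterating this inequality $k$ times yields
\begin{equation*}
f(t) \leq \theta^{k} f(t_k) + \frac{A}{(1-\tau)^{\alpha}(s-t)^{\alpha}} \sum_{j=0}^{k-1} \Bigl(\frac{\theta}{\tau^{\alpha}}\Bigr)^{j} + B \sum_{j=0}^{k-1} \theta^{j}.
\end{equation*}

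Finally, I would let $k \to \infty$. Since $f$ is bounded on $[\tau_0,\tau_1]$ and $\theta < 1$, the leading term $\theta^{k} f(t_k)$ vanishes in the limit; the two geometric series converge thanks to $\theta < 1$ and $\theta \tau^{-\alpha} < 1$, producing
\begin{equation*}
f(t) \leq \frac{A}{(1-\tau)^{\alpha}(1 - \theta \tau^{-\alpha})(s-t)^{\alpha}} + \frac{B}{1-\theta},
\end{equation*}
which is the desired estimate with $C(\alpha,\theta) = \max\bigl\{(1-\tau)^{-\alpha}(1-\theta\tau^{-\alpha})^{-1},\, (1-\theta)^{-1}\bigr\}$ for any admissible $\tau$; optimizing over $\tau$ gives the quantitative constant but is not needed for the statement.

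The only subtle point is the choice of $\tau$: one must balance two competing requirements, namely that the intervals $[t_k, t_{k+1}]$ fit inside $[t,s]$ (so $\tau < 1$) and that $\theta \tau^{-\alpha} < 1$ so that the geometric series from the $A$-term converges. The boundedness hypothesis on $f$ is essential to kill the $\theta^{k} f(t_k)$ term; without it the iteration would not close. Everything else is routine algebra, so I do not anticipate any real obstacle beyond keeping track of the constants.
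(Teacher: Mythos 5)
Your proof is correct: the choice $\tau\in(\theta^{1/\alpha},1)$, the geometric sequence $t_{k+1}=t_k+(1-\tau)\tau^k(s-t)$, the iteration, and the passage to the limit using the boundedness of $f$ and the convergence of the two geometric series all go through exactly as written, yielding $C(\alpha,\theta)=\max\left\{(1-\tau)^{-\alpha}\left(1-\theta\tau^{-\alpha}\right)^{-1},(1-\theta)^{-1}\right\}$. The paper itself gives no proof of this lemma and simply cites Lemma 4.3 of \cite{book}, whose proof is precisely this standard hole-filling iteration, so your argument coincides with the intended one.
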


Now we are ready to prove the quantitative unique continuation property.

{\bf {Proof of Theorem \ref{thm}}}\
Denote the maximal vanishing order of $u$ at $z_0$ as $\mathcal{M}(u,z_0)$. Now, we shall establish a quantitative relationship
between $\mathcal{M}(u,z_0)$ and $N(z_0,r)$.
By using Lemma \ref{lem4} and (\ref{dou-3}) with $r_1=\frac{r}{2}<r_2$, we obtain
\begin{align*}
\frac{\|u\|^2_{L^{2}(B_r(z_0))}}{r^{n+1}}&\geq \frac{C^{-1}\left(\lambda^{2}+1\right)^{-1}r^4\left(\|u\|_{L^{2}(B_{r/2}(z_0))}+\|w\|_{L^{2}(B_{r/2}(z_0))}\right)}{r^{n+1}}\nonumber\\[2mm]
&\geq C^{-1}\left(\lambda^{2}+1\right)^{-1}\frac{h(z_0,r/2)}{r^{n+1-4}}\\[2mm]
&\geq C^{-1}\left(\lambda^{2}+1\right)^{-1}\left(\frac{3}{4}\right)^{\alpha}\left(\frac{1}{4r_2}\right)^{n+1+\frac{C(N(z_0,2r_2)+\|\nabla V\|_{L^{\infty}}+1)}{\alpha+1}}h(z_0,r_2)
r^{\frac{C(N(z_0,2r_2)+\|\nabla V\|_{L^{\infty}}+1)}{\alpha+1}+4}.
\end{align*}
According to the definition of vanishing order (\ref{def}),  the above estimate yields
\begin{align}\label{4}
\mathcal{M}(u,z_0)\leq\frac{C(N(z_0,2r_2)+\|\nabla V\|_{L^{\infty}}+1)}{\alpha+1}+2.
\end{align}
Now, we shall estimate $N\Big((0,0),\frac{1}{4}\Big)$. Since
\begin{align*}
u(x,t)=e^{\sqrt{\lambda}t}u(x),
\end{align*}
and  $|u(0)|\geq 1$, $\|u(x)\|_{L^{\infty}\left(B_{1}(0)\right)}\leq C_0$.
So,
\begin{align}\label{assump}
|u(0,0)|\geq1, \quad \|u(x,t)\|_{L^{\infty}\left(B_{1}(0,0)\right)}\leq C_0e^{\sqrt{\lambda}}.
\end{align}
From (\ref{dou-2}) with  $r_1=\frac{1}{4}, r_2=\frac{1}{2}$, it holds
\begin{align}\label{2}
N\Big((0,0),\frac{1}{4}\Big)\leq C\left\{ \|\nabla V\|_{L^{\infty}}+1+\Big(\alpha+1\Big)\frac{\log\frac{H\left((0,0),\frac{1}{2}\right)}{H\left((0,0),\frac{1}{4}\right)}}{\log 2}\right\}.
\end{align}
Next, we shall estimate $\log\frac{H\left((0,0),\frac{1}{2}\right)}{H\left((0,0),\frac{1}{4}\right)}$ by virtue of the assumptions (\ref{assump}).
By Lemma \ref{lem4}, we infer that
\begin{align*}
H((0,0),r_2)&\leq r_2^{2\alpha}\int_{B_{r_2}(0,0)}(u^2+w^2)dz\nonumber\\[2mm]
&\leq C r_{2}^{2\alpha}\left\{\int_{B_{r_2}(0,0)}u^2dz+(\lambda ^2+1)r_2^{-4}\int_{B_{2r_2}(0,0)}u^2dz\right\}\nonumber\\[2mm]
&\leq C(\lambda^2+1)r_{2}^{2\alpha-4}\int_{B_{2r_2}(0,0)}u^2dz\nonumber\\[2mm]
&\leq C(\lambda^2+1)r_{2}^{2\alpha-4+n+1}\|u\|^2_{L^{\infty}({B_{2r_2}(0,0)})}\nonumber\\[2mm]
&\leq C^2_0C(\lambda^2+1)r_{2}^{2\alpha-4+n+1}e^{2\sqrt{\lambda}},
\end{align*}
which implies
\begin{align}\label{1}
H((0,0),\frac{1}{2})
\leq C^2_0C(\lambda^2+1)\left(\frac{1}{2}\right)^{2\alpha-4+n+1}e^{2\sqrt{\lambda}}.
\end{align}
On the other hand,  applying  Lemma \ref{lem3}, one has
\begin{align*}
H((0,0),r_1)&\geq \left(\frac{3}{4}r_1^2\right)^{\alpha}\int_{B_{\frac{1}{2}r_1}(0,0)}(u^2+w^2)dz\nonumber\\[2mm]
&\geq C^{-1}\left(\frac{3}{4}r_1^2\right)^{\alpha} r_1^{n+1}\left(\lambda^2+\|\nabla V\|^{\frac{n+1}{4}}_{L^{\infty}}+1\right)^{-2}\|u\|^2_{L^{\infty}({B_{\frac{1}{4}r_1}(0,0)})},
\end{align*}
so,
\begin{align*}
H((0,0),\frac{1}{4})
\geq C^{-1}\left(\frac{3}{64}\right)^{\alpha} \left(\frac{1}{4}\right)^{n+1}\left(\lambda^2+\|\nabla V\|^{\frac{n+1}{4}}_{L^{\infty}}+1\right)^{-2}.
\end{align*}
which together with (\ref{1}) implies that
\begin{align*}
\frac{\log\frac{H((0,0),\frac{1}{2})}{H((0,0),\frac{1}{4})}}{\log2}\leq C\left(\sqrt{\lambda}+\alpha+1\right).
\end{align*}
Then, submitting the above into (\ref{2}), we get
\begin{align}\label{3}
N\Big((0,0),\frac{1}{4}\Big)\leq C(\alpha+1)\left(\sqrt{\lambda}+\alpha+1\right).
\end{align}
Moreover, using the almost monotonicity property of $N(r)$ in (\ref{almost}),  for any $\rho<\frac{1}{4}$, it holds
\begin{align}\label{6}
N\Big((0,0),\rho\Big)\leq C(\alpha+1)\left(\sqrt{\lambda}+\alpha+1\right).
\end{align}
Then, (\ref{4}) and (\ref{3}) yields that the maximal vanishing order of $u$ at $(0,0)$ is
\begin{align*}
\mathcal{M}\left(u,(0,0)\right)&\leq C\left(\sqrt{\lambda}+\alpha+1\right).
\end{align*}
Furthermore, using Lemma \ref{lem-change}, (\ref{4}) and (\ref{6})  again, for any $z_1\in B_{r/32}(0,0)$,
\begin{align*}
\mathcal{M}(u,z_1)&\leq\frac{C(N(z_1,2r_2)+\|\nabla V\|_{L^{\infty}}+1)}{\alpha+1}+2\\[2mm]
&\leq \frac{C(N(0,9r_2)+(\alpha+1)^2+\|\nabla V\|_{L^{\infty}}+1)}{\alpha+1}+2\\[2mm]
&\leq C\left(\sqrt{\lambda}+\alpha+1\right).
\end{align*}
Continuing  the above process, for any $z_2\in B_{\frac{r}{32}}(z_1)$,
\begin{align*}
\mathcal{M}(u,z_2)&\leq\frac{C(N(z_2,2r_2)+\|\nabla V\|_{L^{\infty}}+1)}{\alpha+1}+2\\[2mm]
&\leq \frac{C(N(z_1,9r_2)+(\alpha+1)^2+\|\nabla V\|_{L^{\infty}}+1)}{\alpha+1}+2\\[2mm]
&\leq  \frac{C(N(0,81r_2/2)+(\alpha+1)^2+\|\nabla V\|_{L^{\infty}}+1)}{\alpha+1}+2\\[2mm]
&\leq C\left(\sqrt{\lambda}+\alpha+1\right).
\end{align*}
Therefore,  we fix the small value of
$r$, for instance, let $r=1/100$.
After a finite number of iterations, we can show that  the maximal vanishing order of $u(x,t)$ at any point
$z\in B_{1}(0,0)$ is $ C\left(\sqrt{\lambda}+\alpha+1\right)$. Back to the solutions $u(x)$ of our original equation (\ref{equ1}), recalling $\sqrt{\lambda}=\sqrt{2}\|V\|^{1/4}_{L^{\infty}}$ and $\alpha=\|\nabla V\|_{L^{\infty}}$, we prove that the vanishing order of  $u$ at any point $x\in B_1(0)$ is not large than $C\left(\|V\|^{\frac{1}{4}}_{L^{\infty}}+\|\nabla V\|_{L^{\infty}}+1\right)$.
This finishes the proof.
\qed

\section*{Funding}
This work was supported by the  National Natural Science Foundation of China  under Grant No.12071219 and  No.11971229.

\end{document}